\tikzset{x=1cm,y=1cm}
\journal{arXiv}
\begin{document}

\theoremstyle{definition}
\newtheorem{definition}{Definition}

\newtheorem{theorem}{Theorem}
\newtheorem{proposition}{Proposition}
\newtheorem{lemma}{\it Lemma}
\newtheorem{corollary}{Corollary}

\theoremstyle{remark}
\newtheorem{remark}{Remark}

\renewcommand*{\arraycolsep}{2pt}

\newcommand{\mychi}{\raisebox{2pt}[1ex][1ex]{$\chi$}}

\begin{frontmatter}



\title{Localization of Control Synthesis Problem for Large-Scale Interconnected System Using IQC and Dissipativity Theories}


\author{Olugbenga Moses Anubi and Layne Clemen}

\address{Department of Mechanical and Aerospace Engineering,\\University of California,\\ Davis, California, USA}
\ead{anubimoses@gmail.com, laclemen@ucdavis.edu}

\begin{abstract}
The synthesis problem for the compositional performance certification of interconnected systems is considered. A fairly unified description of control synthesis problem is given using integral quadratic constraints (IQC) and dissipativity. Starting with a given large-scale interconnected system and a global performance objective, an optimization problem is formulated to search for admissible dissipativity properties of each subsystems. Local control laws are then synthesized to certify the relevant dissipativity properties. Moreover, the term localization is introduced to describe a finite collection of syntheses problems, for the local subsystems, which are a feasibility certificate for the global synthesis problem. Consequently, the problem of localizing the global problem to a smaller collection of disjointed sets of subsystems, called groups, is considered. This works looks promising as another way of looking at decentralized control and also as a way of doing performance specifications for components in a large-scale system.
\end{abstract}

\begin{keyword}

Dissipativity, Optimization, Storage Function, Supply Rates, Integral Quadratic Constraints
\end{keyword}

\end{frontmatter}


\section{Introduction}\label{Introduction}
Due to constantly increasing systems requirements and complexity, developments in decentralized and compositional analyses and control have received a lot of research attention. In most approaches to decentralized control, sufficient conditions for the behavioral satisfaction of each component or subsystem are derived. A large-scale optimization problem is then formulated for the global objective with the local behavioral sufficient conditions as constraints. The resulting solution algorithm generally requires a solution for the local problems in the loop. As a result, the approach does not scale well for systems with non-uniform\footnote{this will make indexing very challenging - as the subsystems have to be handled individually and differently} local analyses. In order to overcome this limitation, a unified framework for describing system behavioral properties at both the subsystems and global level is inevitable. The use of IQCs seems appealing since they provide a way of describing relationships between processes evolving in a complex dynamical system, in a form that is convenient for analysis. 

IQCs were originally introduced as a way to characterize troublesome nonlinearities, time-variance, and uncertainties that plagued the application of linear control methods. Ever since, they have been used extensively in control\cite{fu1998robust,megretski1997system,apkarian2006iqc}, and presently gaining interests in optimization \cite{lessard2014analysis,nishihara2015general}, as a unified way of characterizing system behavior from input-output point-of-view. They are used extensively in this work to describe desired input-output system characteristics. 

The following issues are addressed in this paper:\vspace{-0.25cm}
	\subsection*{Unified description of control synthesis problems} This paper describes a way of looking at control synthesis problems as convex optimization problems involving parametrized integral quadratic constraints. This exposes a paradigm that can help transfer results between optimization and control. Certain properties of the IQCs that are a certificate for the BIBO stability of the resulting closed loop system are provided. Moreover, the feasibility and the solution of the synthesis problem for linear systems are described.
	\subsection*{Global admissibility condition} Given a global synthesis problem associated with an interconnected system and a finite collection of local synthesis problems associated with the local subsystems, the condition for which the feasibility of the local syntheses problems implies that of the global problem is derived. This condition is similar to existing results in literature \cite{meissen2014compositional,scorletti2001lmi,langbort2004distributed}, different only by the containment of an optimized parameter. Solution approach using the alternating direction method of multipliers is given. Also, corollaries under the assumptions of well-posedness and quadratic parametrization are also provided.
	\subsection*{Localization} The term \emph{localization} is introduced to describe the local syntheses problems for which the global admissibility condition holds for all values of the optimized parameter. Furthermore, the concept of \emph{closest localization} is introduced by including certain norm-like objective. It is then shown that the closest localization corresponds to the most relaxed local problem  given any interconnected system and associated global problem. In addition, group localization is introduced as a way of localizing the global problem among disjointed sets of subsystems. An alternating minimization algorithm is also given, without convergence analysis/proof, as one way of solving the group localization problem. This relies on well established results regarding the convergence of alternating minimization algorithms \cite{wang2008new,csisz1984information,chan2000convergence}.

\section{Notation}\label{Notation}
Throughout the paper, the following notations are used: $\mathbb{R}\text{ and }\mathbb{R}_+$ denotes the set of real numbers and positive real numbers respectively. $\mathbb{R}_+^n$ denotes the first orthant of the $n$-dimensional euclidean space. The set of all symmetric positive definite and positive semi-definite matrices are denoted by $\mathbb{S}_{++}\text{ and }\mathbb{S}_+$ respectively. The Euclidean norm of a vector $\textbf{x}\in\mathbb{R}^n$ is denoted by $\left\|\textbf{x}\right\|\triangleq\left(\textbf{x}^\top\textbf{x}\right)^{1/2}$. The quadratic form $\left\|\textbf{x}\right\|_P^2\triangleq\textbf{x}^\top P\textbf{x}$ is defined for any matrix $P\in\mathbb{S}_+$. The expression $P\preceq Q$ means that the matrix $Q-P\in\mathbb{S}_+$. The set of the eigenvalues of a square matrix $A$ is denoted by $\lambda(A)$ with $\lambda_{max}(A)\triangleq\max\{\lambda(A)\}$ and $\lambda_{min}(A)\triangleq\min\{\lambda(A)\}$. Similarly, The set of the singular values of a matrix $A$ is denoted by $\sigma(A)$ with $\sigma_{max}(A)\triangleq\max\{\sigma(A)\}$ and $\sigma_{min}(A)\triangleq\min\{\sigma(A)\}$. Occasionally, the block symmetric matrix $\left[\begin{array}{c|c}A&B\\\hline B^{\top^{\phantom{T}}}&C\end{array}\right]$ is written as $\left[\begin{array}{c|c}A&B\\\hline *&C\end{array}\right]$ for compactness purpose.

Let $U\in\mathbb{R}^{k\times n}, \hspace{2mm} r<n$ be a full rank matrix, $U^\perp$ denotes the orthogonal complement of $U$, i.e. $UU^\perp=0$ and $[U\hspace{2mm}U^\perp]$ is of maximal rank.

The euclidean balls $\mathbb{B}_r(0)\text{ and }\mathbb{B}_r(\textbf{x}_0)$ are defined respectively for some $r\in\mathbb{R}_+$ as $\mathbb{B}_r(0)\triangleq\left\{\textbf{x}:\left\|\textbf{x}\right\|\le r\right\}$ and $\mathbb{B}_r(\textbf{x}_0)\triangleq\left\{\textbf{x}:\left\|\textbf{x}-\textbf{x}_0\right\|\le r\right\}$.

The space of all square-integrable signals $\textbf{f}:\mathbb{R}_+\rightarrow\mathbb{R}^n$ satisfying
\begin{align*}
\int_0^\infty\|\textbf{f}(t)\|^2dt<\infty
\end{align*}
is denoted by $\mathbb{L}_2^n$. Consequently, the $\mathcal{L}_2$-norm of a signal $\textbf{f}\in\mathbb{L}_2^n$ is defined as
\begin{align*}
\left\|\textbf{f}\right\|_2\triangleq\left(\int_0^\infty{\|\textbf{f}(t)\|^2dt}\right)^{\frac{1}{2}}
\end{align*}
Let $\textbf{f}:\mathbb{R}_+\mapsto\mathbb{R}^n$. Then for each $T\in\mathbb{R}_+$, the function $\textbf{f}_T:\mathbb{R}_+\mapsto\mathbb{R}^n$ is defined by
\begin{align*}
\textbf{f}_T(t)\triangleq\left\{\begin{array}{rl}\textbf{f}(t),&0\le t<T\\\textbf{0},&t\ge T\end{array}\right.
\end{align*}
and is called the \emph{truncation} of $\textbf{f}$ to the interval $[0,\hspace{2mm}T]$. Consequently, the set $\mathbb{L}_{2e}^n$ of all measurable signal $\textbf{f}:\mathbb{R}_+\mapsto\mathbb{R}^n$ such that $\textbf{f}_T(t)\in\mathbb{L}_2$ for all $T\in[0,\hspace{2mm}\infty)$ is called the extension of $\mathbb{L}_2^n$ or the extended $\mathbb{L}_2$-space. The Fourier transform of $\mathbf{f}\in\mathbb{L}_2^n$ is denoted by
\begin{align*}
\widehat{\mathbf{f}}(j\omega)\triangleq\int_0^\infty e^{-j\omega t}\mathbf{f}(t)dt.
\end{align*}

\section{Preliminaries}\label{Preliminary}


This section develops some preliminary results required for the main developments and applications in the subsequent sections.
\begin{definition}[Causality]
Let $P_T$ be a past projection operator\footnote{Operator is used to describe a function from one $\mathbb{L}_{2e}$ space to another} defined for any $T>0$ by
\begin{align}
P_T\mathbf{f}(t) = \mathbf{f}_T(t).
\end{align}
The operator $\Delta:\mathbb{L}_{2e}^p\mapsto\mathbb{L}_{2e}^r$ is then said to be causal if $P_T\Delta = P_T\Delta P_T$ for any $T>0$.
\end{definition}

\begin{definition}[BIBO\footnote{bounded-input bounded-ouput} Stability]
A causal operator $\Delta:\mathbb{L}_{2e}^p\mapsto\mathbb{L}_{2e}^r$ is said to be stable if there exists $c\in\mathbb{R}_+$ such that
\begin{align}
\int_0^T{\Delta(\mathbf{w}(t))^\top\Delta(\mathbf{w}(t)) dt}\le c\int_0^T{\mathbf{w}(t)^\top\mathbf{w}(t) dt}
\end{align}
for all $T\ge0$ and $\mathbf{w}\in\mathbb{L}_{2e}^p$.
\end{definition}
\subsection{Integral Quadratic Constraints (IQC)}

In general, IQCs give useful characterization of the structure of operators - which in this case are systems described by ordinary differential equations. 
\begin{definition}[Integral Quadratic Constraints\cite{megretski1997system}]
A bounded operator $\Delta:\mathbb{L}_{2e}^p\rightarrow\mathbb{L}_{2e}^r$ is said to satisfy the IQC defined by $\Pi$ if
\begin{align}\label{IQC1}
\int_{-\infty}^\infty\left[\begin{array}{c}\widehat{\mathbf{w}}(j\omega)\\\widehat{\mathbf{z}}(j\omega)\end{array}\right]^*\Pi(j\omega)\left[\begin{array}{c}\widehat{\mathbf{w}}(j\omega)\\\widehat{\mathbf{z}}(j\omega)\end{array}\right]d\omega\ge0
\end{align}
holds for all $\mathbf{z}=\Delta(\mathbf{w})$, $\mathbf{w}\in\mathbb{L}_2$.
\end{definition}
Often, $\Pi$ is referred to as the \emph{multiplier} that defines IQC and the shorthand notation $\Delta\in \text{IQC}(\Pi)$ is used to meean that $\Delta$ satisfies the IQC defined by $\Pi$.

\begin{remark}
This input-output characteristic will be used later to generalize the class of control synthesis problems considered in this paper.
\end{remark}
\begin{remark}
Observer that $\Delta\in \text{IQC}(\Pi)$ implies that $\Delta\in \text{IQC}(\alpha\Pi)$ for all $\alpha\in\mathbb{R}_+$. Thus, the set of multipliers $\mathcal{K}(\Delta) \triangleq \left\{\Pi|\hspace{2mm}\Delta \in\text{IQC}(\Pi)\right\}$ forms a convex cone. This is an indication that control synthesis problems for a dynamical system described by the operator $\Delta$ can be described as corresponding optimization problems over the cone $\mathcal{K}(\Delta)$.
\end{remark}
\begin{proposition}[Stability Certificate]
The operator $\Delta:\mathbb{L}_{2e}^p\mapsto\mathbb{L}_{2e}^r$ is stable if and only if there exist a bounded\footnote{bounded eigenvalues} $\Pi(j\omega)$ given by the conformal block
\begin{align}
\Pi(j\omega) = \left[\begin{array}{cc}\Pi_{11}(j\omega)&\Pi_{12}(j\omega)\\\Pi_{12}(j\omega)^*&\Pi_{22}(j\omega)\end{array}\right].
\end{align}
satisfying $\Pi_{11}(j\omega)\succeq0$, and $\Pi_{22}(j\omega)\preceq0$ for all $\omega\in\mathbb{R}$,
such that 
\begin{align}
\Delta\in \text{IQC}(\Pi).
\end{align}
\end{proposition}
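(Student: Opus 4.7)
The plan is to handle the two implications separately, moving between the frequency-domain IQC and the time-domain BIBO bound via Parseval's theorem, and using the causality of $\Delta$ to lift $\mathbb{L}_2$ estimates to the $\mathbb{L}_{2e}$ truncation bound required by the stability definition.

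For the (easier) necessity direction, I would take a stable $\Delta$ with constant $c$ and build an explicit multiplier. The natural choice is the constant frequency-independent block
\begin{align*}
\Pi(j\omega) = \begin{bmatrix} c\, I_p & 0 \\ 0 & -I_r \end{bmatrix},
\end{align*}
which trivially satisfies $\Pi_{11}\succeq 0$, $\Pi_{22}\preceq 0$, and is bounded. For any $\mathbf{w}\in\mathbb{L}_2^p$ with $\mathbf{z}=\Delta(\mathbf{w})$, Parseval's identity converts the IQC integral to $c\|\mathbf{w}\|_2^2-\|\mathbf{z}\|_2^2$, and a routine application of the stability bound (using a monotone convergence / truncation argument to pass from the truncated bound to the $\mathbb{L}_2$ bound) gives nonnegativity. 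Hence $\Delta\in\text{IQC}(\Pi)$.

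For the sufficiency direction, I would assume the existence of a bounded $\Pi$ satisfying the structural conditions and derive a gain bound on $\Delta$. I interpret ``bounded $\Pi$'' in the sense needed for the argument, i.e., there exist $M<\infty$ and $\epsilon>0$ with $\Pi_{11}(j\omega)\preceq M I$, $\|\Pi_{12}(j\omega)\|\le M$ and $\Pi_{22}(j\omega)\preceq -\epsilon I$ a.e. (this is the standard reading in the IQC literature; the statement as written is slightly loose). Fix $\mathbf{w}\in\mathbb{L}_2^p$ and let $\mathbf{z}=\Delta(\mathbf{w})$. Expanding the IQC and applying Parseval to each block term yields
\begin{align*}
\epsilon\,\|\mathbf{z}\|_2^2 \;\le\; \int_{-\infty}^\infty \widehat{\mathbf{z}}^*(-\Pi_{22})\widehat{\mathbf{z}}\,d\omega \;\le\; M\|\mathbf{w}\|_2^2 + 2M\,\|\mathbf{w}\|_2\|\mathbf{z}\|_2,
\end{align*}
where Cauchy--Schwarz in frequency handles the cross term. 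Solving the resulting quadratic inequality for $\|\mathbf{z}\|_2$ produces a constant $c'$ with $\|\Delta\mathbf{w}\|_2\le c'\|\mathbf{w}\|_2$ for all $\mathbf{w}\in\mathbb{L}_2^p$.

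The final step is to upgrade this $\mathbb{L}_2$ gain bound to the truncation inequality in the BIBO definition. For $T>0$ and $\mathbf{w}\in\mathbb{L}_{2e}^p$, causality gives $P_T\Delta\mathbf{w}=P_T\Delta\mathbf{w}_T$, and $\mathbf{w}_T\in\mathbb{L}_2^p$, so
\begin{align*}
\int_0^T \|\Delta(\mathbf{w})(t)\|^2\,dt = \|P_T\Delta\mathbf{w}_T\|_2^2 \le \|\Delta\mathbf{w}_T\|_2^2 \le (c')^2\|\mathbf{w}_T\|_2^2 = (c')^2\int_0^T\|\mathbf{w}(t)\|^2\,dt,
\end{align*}
which is the required bound with $c=(c')^2$. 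The main obstacle I anticipate is pinning down the precise meaning of ``bounded'' in the hypothesis: without a uniform negative-definiteness of $\Pi_{22}$ (and symmetrically a uniform positive-definiteness or at least a coercive combination), the quadratic inequality above degenerates and one cannot extract a finite gain. I would flag this as a needed strengthening of the statement, and the remainder of the proof is then essentially the block computation above combined with the causality lift.
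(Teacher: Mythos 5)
Your proof is correct and follows essentially the same route as the paper: the constant multiplier $\left[\begin{smallmatrix}cI&0\\0&-I\end{smallmatrix}\right]$ plus Parseval for necessity, and bounding the cross term (Young/Cauchy--Schwarz) to extract a finite $\mathcal{L}_2$ gain for sufficiency, then passing to truncations. Your two refinements --- reading ``bounded'' as a uniform bound $\Pi_{22}\preceq-\epsilon I$ and making the causality/truncation lift explicit --- are precisely the points the paper's own argument glosses over (it uses $\pi_{22}=\sup_\omega\|\Pi_{22}(j\omega)\|$ as though it were a uniform lower bound on $-\Pi_{22}$, and jumps from the $\mathbb{L}_2$ bound to the truncated inequality without invoking causality), so they are improvements in rigor rather than a different approach.
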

\begin{proof}($\Rightarrow$) Suppose $\Delta$ is stable. Then, there exists $c\in\mathbb{R}_+$ such that
\begin{align*}
\int_0^\infty{\Delta(\mathbf{w}(t))^\top\Delta(\mathbf{w}(t)) - c\mathbf{w}(t)^\top\mathbf{w}(t) dt}\le 0,
\end{align*}
which, after using Perseval's identity, implies that 
\begin{align*}
\Delta\in \text{IQC}\left(\left[\begin{array}{cc}cI&0\\0&-I\end{array}\right]\right)
\end{align*}
($\Leftarrow$) Let $\pi_{ij}\triangleq\sup_w\left\|\Pi_{ij}(j\omega)\right\|_2,\hspace{2mm}i,j\in\{1,2\}$. Then, for all $\omega\in\mathbb{R}_+$,
\begin{align*}
\int_{-\infty}^\infty{\left[\begin{array}{c}\widehat{\mathbf{w}}(j\omega)\\\widehat{\Delta}(\mathbf{w}(j\omega))\end{array}\right]^*\Pi(j\omega)\left[\begin{array}{c}\widehat{\mathbf{w}}(j\omega)\\\widehat{\Delta}(\mathbf{w}(j\omega))\end{array}\right] d\omega}\ge0
\end{align*}
implies, using the young's inequality, that
\begin{align*}
\left(\pi_{11}+\varepsilon\right)\left\|\mathbf{w}\right\|_2^2 - \left(\pi_{22}-\frac{\pi_{12}^2}{\varepsilon}\right)\left\|\Delta(\mathbf{w})\right\|_2^2\ge0,
\end{align*}
for some $\varepsilon>0$. Choose $\displaystyle \varepsilon\ge\frac{\pi_{12}^2}{\pi_{22}}$, and since $\mathbf{w}\in\mathbb{L}_{2e}^p$, it follows that
\begin{align}
\int_0^T{\Delta(\mathbf{w}(t))^\top\Delta(\mathbf{w}(t)) dt}\le c\int_0^T{\mathbf{w}(t)^\top\mathbf{w}(t) dt}
\end{align}
for all $T\ge0$, where
\begin{align}\displaystyle
0\le c\triangleq \frac{\left(\pi_{11}+\varepsilon\right)\varepsilon}{\pi_{22}\varepsilon-\pi_{12}^2}
\end{align}
\end{proof}

\subsection{Dissipativity}
Consider a continuous-time, time-invariant dynamical system described by
\begin{align}\label{dyn1}
\Sigma :\left\{ \begin{array}{rclc}\dot{\textbf{x}}(t)&=&\textbf{f}(\textbf{x}(t),\textbf{w}(t)),&\hspace{3mm}\textbf{f}(\textbf{0},\textbf{0})=\textbf{0}\\
\textbf{z}(t)&=&\textbf{h}(\textbf{x}(t),\textbf{w}(t)),&\hspace{3mm}\textbf{h}(\textbf{0},\textbf{0})=\textbf{0}
\end{array}\right.
\end{align}
with $\textbf{x}\in\mathbb{X}\subset\mathbb{R}^n$, $\textbf{w}\in\mathbb{W}\subset\mathbb{R}^p$ and $\textbf{z}\in\mathbb{Z}\subset\mathbb{R}^r$.

\begin{definition}[Supply Rate]
Given the dynamical system in \eqref{dyn1}, a \emph{supply rate} is any mapping $s:\mathbb{W}\times\mathbb{Z}\mapsto\mathbb{R}$ satisfying
\begin{align}
\int_{t_0}^{t_1}|s(\textbf{w}(t),\textbf{z}(t))|dt<\infty
\end{align}
for all $t_0,t_1\in\mathbb{R}_+$ and for all input-output pair $(\textbf{w}(t),\textbf{z}(t))$ satisfying \eqref{dyn1}.
\end{definition}
Henceforth explicit time arguments in signals will be dropped, except otherwise needed for clarity. 
\begin{definition}[Dissipativity]
The system $\Sigma$ is said to be \emph{dissipative} with respect to a supply rate $s$ if there exists a differentiable and nonnegative function $V:\mathbb{X}\mapsto\mathbb{R}_+$ such that
\begin{align}\label{dissineq1}
\nabla V(\textbf{x})^\top\textbf{f}(\textbf{x},\textbf{w})-s(\textbf{w},\textbf{z})\le0
\end{align}
for all $\textbf{x}\in\mathbb{X}$, $\textbf{w}\in \mathbb{W}$. 
\end{definition}
\begin{remark} The inequality in \eqref{dissineq1} is referred to as the \emph{Dissipation Inequality} and describes a property of the system that stipulates that within any time interval, the change in the internal stored energy cannot exceed the total externally supplied energy. Hence, there can be no internal ``creation of energy"; only internal \emph{dissipation} is possible. If \eqref{dissineq1} holds with equality for all $\textbf{x}\in\mathbb{X}$, $\textbf{w}\in \mathbb{W}$, then $\Sigma$ is said to be \emph{lossless} with respect to $s$. 
\end{remark}
\begin{remark} It can be seen that IQC generalize the dissipativity framework to supply rates that are themselves dynamical systems by allowing frequency-dependent requirements to be described. Consequently, a definition of IQC in terms of dissipativity can be given. In the line of the definition given in \cite{meissen2014compositional}, let $(\bar{A},\bar{B},\bar{C},\bar{D})$ be a realization of a stable LTI system $\Psi$ with state vector $\boldsymbol{\eta}\in\mathbb{R}^{n_2}$ such that the multiplier $\Pi$ admits the factorization $\Pi=\Psi^*X\Psi$  . Then the system described by \eqref{dyn1} is said to satisfy the IQC defined by $\Pi$ if there exists a $\mathbb{C}^1$ mapping $V:\mathbb{R}^n\times\mathbb{R}^{n_2}\rightarrow\mathbb{R}_+$ such that
\begin{align}\nonumber
\nabla_xV(\textbf{x},\boldsymbol{\eta})^\top\textbf{f}(\textbf{x},\textbf{w})+\nabla_\eta V(\textbf{x},\boldsymbol{\eta})^\top\left(\bar{A}\boldsymbol{\eta}+\bar{B}\left[\begin{array}{c}\textbf{w}\\\textbf{z}\end{array}\right]\right)\le\hspace{2.5cm}\\\label{IQC2}
\left(\bar{C}\boldsymbol{\eta}+\bar{D}\left[\begin{array}{c}\textbf{w}\\\textbf{z}\end{array}\right]\right)^\top X\left(\bar{C}\boldsymbol{\eta}+\bar{D}\left[\begin{array}{c}\textbf{w}\\\textbf{z}\end{array}\right]\right)
\end{align}
for all $\textbf{x}\in\mathbb{X},\textbf{w}\in\mathbb{W}\text{ and }\textbf{z}=\textbf{h}(\textbf{x},\textbf{w})$.
Traditional dissipativity definition is recovered in the special case where $\Psi$ is static.
\end{remark}

\section{Synthesis Problem}\label{Synthesis_Problem}
Throughout the remainder of this paper, we shall limit our attention to static supply rates. Generalizing to dynamic supply rates is left for future work. If the static supply rate is parameter dependent, a fairly general control synthesis problem can then be defined.
\begin{figure}[!ht]
\centering
\begin{tikzpicture}[line width = 1.5pt,auto,>=latex,node distance=1.6cm]
\tikzstyle{block}=[draw,rectangle,minimum width=4em, minimum height=4em]
\def\x{1.0}		
\def\y{0.5}		
\def\z{1.4}		
\node [block](H){\Large $H$};
\node [draw,rectangle, minimum width=1em, minimum height=1em,thick](X) at (1.3em,-1.3em){$\mathbf{x}$};
\draw [->] (H.east)+(0,0) -- +(\z,0) node [anchor=west]{$\mathbf{y}$};
\draw [<-] (H.west)+(0,-\y) -- +(-\z,-\y) node [anchor=east]{$\mathbf{v}$};
\draw [<-] (H.west)+(0,+\y) -- +(-\z,+\y) node [anchor=east]{$\mathbf{u}$};
\end{tikzpicture}
\caption{Dynamic Operator}
\label{fig_synthesis}
\end{figure}
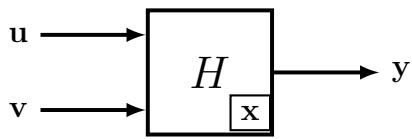

Consider the causal operator $H:\mathbb{L}_{2e}^{n_u}\times\mathbb{L}_{2e}^{n_v}\mapsto\mathbb{L}_{2e}^{n_y}$ as shown in Fig.~\ref{fig_synthesis}. Here, $\mathbf{x}\in\mathbb{R}^{n_x}$ represents the internal states of the operator, $\left(\mathbf{v},\mathbf{y}\right)\in\mathbb{L}_{2e}^{n_v}\times\mathbb{L}_{2e}^{n_y}$ are the input-output pair over which the desired performance objectives are defined and $\mathbf{u}:\mathbb{L}_{2e}^{n_x}\times\mathbb{L}_{2e}^{n_y}\mapsto\mathbb{L}_{2e}^{n_u}$ is the control input which is to be designed to achieve the performance objectives. The numbers  $n_x, n_v+n_y=n,n_u\in\mathbb{N}$ are the dimensions of the respective spaces. Now, we have all the ingredients to give a formal definition of a synthesis problem.

\begin{definition}[Synthesis problem]
Given a parametrized \emph{static} multiplier\footnote{the terms multiplier and supply rate are used interchangeably} $X(\gamma)\in\mathbb{R}^{n\times n},X_{11}(\gamma)\succeq0,X_{22}(\gamma)\preceq0$, with parameter $\gamma\in\mathbb{R}$. The multiplier $X(\gamma)$ is monotonically increasing in the parameter $\gamma$, namely $X(\gamma_1)-X(\gamma_2)\preceq0\Leftrightarrow\gamma_1-\gamma_2\le0$. The synthesis problem for the operator $H:\mathbb{L}_{2e}^{n_u}\times\mathbb{L}_{2e}^{n_v}\mapsto\mathbb{L}_{2e}^{n_y}$ is given by the optimization problem
\begin{align}\label{synthesis}
P_H:\left\{
\begin{array}{rl}
\text{min}&\gamma^2\\
\text{s.t}&H\in\text{IQC}(X(\gamma)),
\end{array}
\right.
\end{align}
and is denoted by $P_H(X(\gamma);\gamma^2)$
\end{definition}
\begin{remark}[Feasibility]
The synthesis problem $P_H(X(\gamma);\gamma^2)$ is said to be feasible if the associated optimization problem is feasible. i.e there exists a control law $\mathbf{u}:\mathbb{L}_{2e}^{n_x}\times\mathbb{L}_{2e}^{n_y}\mapsto\mathbb{L}_{2e}^{n_u}$ and at least one parameter $\gamma_f\in\mathbb{R}$ such that $H\in\text{IQC}(X(\gamma_f))$.
\end{remark}
\begin{remark}[Solution]
The shorthand $\gamma^* = P_H(X(\gamma);\gamma^2)$ is used to mean that $\gamma^*$ is the solution of the synthesis problem $P_H(X(\gamma);\gamma^2)$ and is defined as the solution of the corresponding optimization problem $P_H$ in \eqref{synthesis}. 
\end{remark}
\begin{remark}[Properties]
The following properties of the synthesis problem are obvious;

\begin{itemize}
	\item $P_H(X(\gamma);\gamma^2) = P_H(\alpha X(\gamma);\gamma^2)$ for all $\alpha\in\mathbb{R}_+$
	\item $P_H(X(\gamma);\gamma^2) = P_H(X(\gamma) + X^\top(\gamma);\gamma^2)$
\end{itemize}
Hence, without loss of generality, the supply rates (multipliers) will be assumed symmetric. The above properties also demonstrates that scaling is inconsequential for the solution of the synthesis problem. Only the eigenstructure of the multipliers are important.
\end{remark}
\subsection{Basic Synthesis Problems}
Here, we discuss the synthesis problems corresponding to some basic control objectives which can be described within the dissipativity framework. 
Consider the dynamical system
\begin{align}\label{dyn2}
H :\left\{ \begin{array}{rcl}\dot{\textbf{x}}&=&\textbf{f}(\textbf{x},\textbf{u},\textbf{v}),\\
\textbf{y}&=&\textbf{h}(\textbf{x},\textbf{u},\textbf{v}).
\end{array}\right.
\end{align}
The corresponding synthesis problem is given by
\begin{align}
P_H:\left\{\begin{array}{rl}
\text{min}&\gamma^2\\
\text{s.t}\\&\nabla V(\mathbf{x})^\top\mathbf{f}(\mathbf{x},\mathbf{u},\mathbf{v})-\left[\begin{array}{c}\mathbf{v}\\\mathbf{y}\end{array}\right]^\top X(\gamma)\left[\begin{array}{c}\mathbf{v}\\\mathbf{y}\end{array}\right]\le0,\\&\text{for all }\textbf{x}\in\mathbb{X},\textbf{v}\in\mathbb{L}_{2e}^{n_v}\text{ and }\textbf{y}=\textbf{h}(\textbf{x},\textbf{u},\mathbf{v}).
\end{array}\right.
\end{align}

\subsubsection{Square Systems}
The term \emph{square} is used to describe systems with equally dimensioned input and output. First, the input-output objectives considered are defined. Then, the synthesis problem encompassing the defined objectives is given.
\begin{definition}[Positive-real or Passivity Objectives]
The state space system $H$ in \eqref{dyn2} is:
\begin{itemize}
	\item \emph{passive} if it is dissipative with respect to the supply rate $s(\textbf{w},\textbf{z})=\textbf{w}^\top\textbf{z}$.
\item \emph{strictly output passive} if there exists $\varepsilon>0$ such that $H$ is dissipative with respect to the supply rate $s(\textbf{w},\textbf{z})=\textbf{w}^\top\textbf{z}-\varepsilon\left\|\textbf{z}\right\|^2$.
\end{itemize}
\end{definition}

\begin{definition}[Synthesis Problem for Positive-real Objectives]
The synthesis problem for positive-real objectives is given by the optimization problem
\begin{align}\label{synthesis_square}
P_H:\left\{
\begin{array}{rl}
\text{min}&-\varepsilon\\
\text{s.t}\\&H\in\text{IQC}\left(\left[\begin{array}{cc}0&I\\I&-\varepsilon I\end{array}\right]\right),\\&\hspace{2mm}\varepsilon\ge0,
\end{array}
\right.
\end{align}
\end{definition}
\begin{remark}
It is also possible to be interested in just the feasible problem. In which case, the objective of the synthesis problem is to certify a strict output passivity for a given $\varepsilon\ge0$. The corresponding synthesis problem in this case is given by $P_H\left(\left[\begin{array}{cc}0&I\\I&-\varepsilon I\end{array}\right];0\right)$.
\end{remark}
\subsubsection{Rectangular Systems}
Next, we consider systems in which input and output dimensions are not necessarily the same.
\begin{definition}[$\mathcal{L}_2$-gain]
The $\mathcal{L}_2$-gain of the state space system $H$ in \eqref{dyn2}, from the exogenous input $\mathbf{v}$ to the regulated output $\mathbf{y}$, is given by
\begin{align}
\gamma(H)\triangleq\inf\left\{\tilde{\gamma}|\exists b \ni \|\textbf{y}_T\|_2^2\le\tilde{\gamma}^2\|\textbf{v}_T\|_2^2+b,\hspace{2mm}\forall\hspace{2mm} \textbf{v}\in\mathbb{L}_{2e}^p,\hspace{2mm}\forall T\ge0\right\}
\end{align}
 \end{definition}
\begin{proposition}[Section 3.1 \cite{schaft1999l2}]
The state space system $\Sigma$ in \eqref{dyn1} has $\mathcal{L}_2$-gain$\le\gamma$ if it is dissipative with respect to the supply rate $s(\textbf{w},\textbf{z})=\frac{1}{2}\gamma^2\|\textbf{w}\|^2-\|\textbf{z}\|^2$.
\end{proposition}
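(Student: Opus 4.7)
The plan is to proceed directly from the dissipation inequality, integrate it along a system trajectory, and then read off the $\mathcal{L}_2$-gain bound from the definition already given. I would fix an arbitrary admissible pair $(\mathbf{v},\mathbf{y})$ generated by $\Sigma$ with initial state $\mathbf{x}(0)=\mathbf{x}_0$ and an arbitrary horizon $T\ge 0$, then chain the inequality \eqref{dissineq1} applied pointwise along this trajectory.

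Concretely, since $\Sigma$ is assumed dissipative with storage $V:\mathbb{X}\to\mathbb{R}_+$ and supply $s(\mathbf{w},\mathbf{z})=\tfrac{1}{2}\gamma^2\|\mathbf{w}\|^2-\|\mathbf{z}\|^2$, the dissipation inequality gives
\begin{align*}
\tfrac{d}{dt}V(\mathbf{x}(t)) \;=\; \nabla V(\mathbf{x}(t))^\top \mathbf{f}(\mathbf{x}(t),\mathbf{v}(t)) \;\le\; s(\mathbf{v}(t),\mathbf{y}(t))
\end{align*}
for almost every $t\in[0,T]$. Integrating from $0$ to $T$ and using nonnegativity of $V$ to discard $V(\mathbf{x}(T))\ge 0$ on the left, I obtain
\begin{align*}
\int_0^T \|\mathbf{y}(t)\|^2\,dt \;\le\; \tfrac{1}{2}\gamma^2\int_0^T \|\mathbf{v}(t)\|^2\,dt \;+\; V(\mathbf{x}_0),
\end{align*}
i.e.\ $\|\mathbf{y}_T\|_2^2\le \tilde{\gamma}^2\|\mathbf{v}_T\|_2^2+b$ with $\tilde{\gamma}^2=\tfrac{1}{2}\gamma^2$ and bias $b=V(\mathbf{x}_0)$.

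Finally I would compare the resulting inequality with the definition of $\gamma(H)$. Since $b=V(\mathbf{x}_0)$ depends only on the initial state and not on the input $\mathbf{v}$ or the horizon $T$, it qualifies as the bias required in the infimum defining $\gamma(H)$. Taking the infimum therefore yields $\gamma(H)\le\tilde{\gamma}\le\gamma$, which is the stated conclusion (up to the factor $\tfrac{1}{2}$ in the supply rate, which is the one bookkeeping subtlety I would have to be careful about and which is reconciled by the convention in \cite{schaft1999l2}). There is no serious obstacle here: the argument is a one-line integration, and the only thing to watch is the direction of the inequality when discarding $V(\mathbf{x}(T))$ and the handling of the constant bias $b$ to fit the $\mathcal{L}_2$-gain definition.
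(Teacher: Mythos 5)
Your argument is correct and is essentially the standard proof of this fact: the paper itself gives no proof (it cites Section 3.1 of \cite{schaft1999l2}), and the cited argument is exactly your one-line integration of the dissipation inequality, discarding $V(\mathbf{x}(T))\ge 0$ and absorbing $V(\mathbf{x}_0)$ into the bias $b$ of the $\mathcal{L}_2$-gain definition. Your handling of the factor $\tfrac{1}{2}$ is also sound, since $\tilde{\gamma}=\gamma/\sqrt{2}\le\gamma$ still certifies the stated bound.
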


Sometimes, it is desired to keep the system trajectory within a known set. That is, to enforce some constraints on the reachable set of the system. These constraints could translate directly to requirements such as safety, behavioral or validity. The following theorem describes the condition which provides a guarantee for the invariance of some sets under norm-boundedness assumptions on the exogenous input. This type of objective is termed \emph{reachability} because of the direct constraints imposed on the set of reachable states.

\begin{proposition}[Reachability]
Suppose $\textbf{f}:\mathbb{X}\times\mathbb{W}\mapsto\mathbb{R}^n$ in \eqref{dyn1} is Lipschitz continuous in both arguments. Given $\beta>0$ and a differentiable $V:\mathbb{X}\mapsto\mathbb{R}$ satisfying $V(\textbf{0})\le\beta$. Suppose that $\left\|\textbf{w}\right\|_2^2\le\beta$. If
\begin{align}\label{reach1}
\nabla V(\textbf{x})^\top\textbf{f}(\textbf{x},\textbf{w})\le\textbf{w}^T\textbf{w}
\end{align}
for all $\textbf{x}\in\mathbb{X}$ and $\textbf{w}\in\mathbb{W}$, then $V(\textbf{x})\le2\beta$ for all $t\in\mathbb{R}_+$ - meaning that the set $\mathcal{E}\triangleq\left\{\textbf{x}|\hspace{2mm}V(\textbf{x})\le2\beta \right\}$ is invariant
\end{proposition}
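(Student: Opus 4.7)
The plan is to read the hypothesis \eqref{reach1} as an infinitesimal bound on $\frac{d}{dt}V(\mathbf{x}(t))$ along trajectories and then integrate it against the energy budget $\|\mathbf{w}\|_2^2 \le \beta$, adding in the initial budget $V(\mathbf{0}) \le \beta$ to get the factor of two. The Lipschitz hypothesis on $\mathbf{f}$ is used only to guarantee that, for any admissible $\mathbf{w}$, a unique absolutely continuous solution $\mathbf{x}(\cdot)$ of \eqref{dyn1} exists on $\mathbb{R}_+$ so that the chain rule can be applied; this is standard and I would merely cite it.

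First I would fix an arbitrary $\mathbf{w}\in\mathbb{W}$ with $\|\mathbf{w}\|_2^2\le\beta$, take the corresponding trajectory $\mathbf{x}(\cdot)$ starting from $\mathbf{x}(0)=\mathbf{0}$ (the equilibrium singled out in \eqref{dyn1} by $\mathbf{f}(\mathbf{0},\mathbf{0})=\mathbf{0}$, and the natural initial condition implicit in the statement since $V(\mathbf{0})\le\beta$ is the initial budget), and write
\begin{align*}
\frac{d}{dt}V(\mathbf{x}(t)) \;=\; \nabla V(\mathbf{x}(t))^\top \mathbf{f}(\mathbf{x}(t),\mathbf{w}(t)) \;\le\; \mathbf{w}(t)^\top \mathbf{w}(t)
\end{align*}
by the chain rule and hypothesis \eqref{reach1}.

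Next I would integrate this pointwise bound from $0$ to an arbitrary $t\in\mathbb{R}_+$:
\begin{align*}
V(\mathbf{x}(t)) - V(\mathbf{x}(0)) \;\le\; \int_0^t \mathbf{w}(\tau)^\top \mathbf{w}(\tau)\,d\tau \;\le\; \int_0^\infty \|\mathbf{w}(\tau)\|^2\,d\tau \;=\; \|\mathbf{w}\|_2^2 \;\le\; \beta.
\end{align*}
Combining with $V(\mathbf{x}(0))=V(\mathbf{0})\le\beta$ gives $V(\mathbf{x}(t))\le 2\beta$, which is exactly the claim, and shows $\mathbf{x}(t)\in\mathcal{E}$ for every $t\ge 0$, i.e. $\mathcal{E}$ is forward invariant along any trajectory driven by an admissible disturbance.

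The argument itself is essentially a one-line integration, so there is no genuine obstacle once existence of $\mathbf{x}(\cdot)$ is in hand; the only delicate points to flag are (i) the implicit initial condition $\mathbf{x}(0)=\mathbf{0}$, which I would make explicit at the top of the proof, and (ii) the fact that nonnegativity of $V$ is not actually needed for the upper bound—only $V(\mathbf{0})\le\beta$ is—so the hypothesis as stated is already sufficient. If $\mathbf{x}$ lives on a manifold $\mathbb{X}\subsetneq\mathbb{R}^n$, one would additionally note that the chain rule still applies because $\mathbf{f}$ is tangent to trajectories by construction, but otherwise no extra work is required.
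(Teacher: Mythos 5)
Your proof is correct and follows essentially the same route as the paper: integrate the dissipation inequality \eqref{reach1} along the trajectory, bound the integral of $\|\mathbf{w}\|^2$ by $\beta$ via the $\mathcal{L}_2$-budget, and add the initial bound $V(\mathbf{0})\le\beta$ to obtain $V(\mathbf{x}(t))\le 2\beta$. Your added remarks on the implicit initial condition $\mathbf{x}(0)=\mathbf{0}$ and on the role of the Lipschitz assumption only make explicit what the paper leaves tacit.
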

\begin{proof}
Integrating the dissipation inequality in \eqref{reach1} yields
\begin{align*}
V(\textbf{x}(t))&\le V(\textbf{0}) + \int_0^t{\|\textbf{w}(\tau)\|^2d\tau}\\
&\le2\beta.
\end{align*}
\end{proof}

\begin{definition}[Synthesis Problem for System-Norm Objectives]
The synthesis problem for norm objectives is given by
\begin{align}\label{synthesis_rectangle}
P_H:\left\{
\begin{array}{rl}
\text{min}&\gamma^2+\lambda\beta^2\\
\text{s.t}\\&\nabla V(\mathbf{x})^\top\mathbf{f}(\mathbf{x},\mathbf{u},\mathbf{v})-\left[\begin{array}{c}\mathbf{v}\\\mathbf{y}\end{array}\right]^\top\left[\begin{array}{cc}\gamma^2 I&0\\0&-I\end{array}\right]\left[\begin{array}{c}\mathbf{v}\\\mathbf{y}\end{array}\right]\le0,\\
&\left\|\mathbf{y}\right\|^2-\beta^2V(\mathbf{x})\le0,\\\\
&\text{for all }\textbf{x}\in\mathbb{X},\textbf{v}\in\mathbb{L}_{2e}^{n_v}\text{ and }\textbf{y}=\textbf{h}(\textbf{x},\textbf{u},\mathbf{v}),
\end{array}
\right.
\end{align}
where $\lambda\ge0$ is a weighting on the multi-objective performance index.
\begin{remark}
The first inequality constraint, together with the first term in the performance index, corresponds to the $\mathcal{L}_2$-gain objective. The second inequality and the second term of the performance index define an $H_2$-like objective for the synthesis problem. This is obvious when the system $H$ is linear time invariant with a Lyapunov function $V(\mathbf{x})=\mathbf{x}^TP\mathbf{x}$.
\end{remark}
\begin{remark}
Subsets of the above synthesis problem can be considered. For instance, removing the second inequality together with second term of the performance index results in the $H_\infty$ problem $P_H\left(\left[\begin{array}{cc}\gamma^2 I&0\\0&-I\end{array}\right];\gamma^2\right)$
\end{remark}
\end{definition}
\subsection{Linear Systems}
The goal is here to give a full description of the synthesis problem for linear systems. This includes the derivation of the feasibility conditions as well. Consider the linear system
\begin{align}
H_L:\left\{\left[\begin{array}{c}\dot{\mathbf{x}}\\\mathbf{y}\\\mathbf{y}_m\end{array}\right] = \left[\begin{array}{c|cc}A&B_1&B_2\\\hline C_1&D_{11}&D_{12}\\ C_2&D_{21}&0\end{array}\right]\left[\begin{array}{c}\mathbf{x}\\\mathbf{v}\\\mathbf{u}\end{array}\right],\right.
\end{align}
where $\mathbf{x}\in\mathbb{R}^n$ is the state vector, $\mathbf{y}\in\mathbb{R}^{n_y}$ is the output of interest, $\mathbf{y}_m\in\mathbb{R}^{n_m}$ is the measured output, $\mathbf{v}\in\mathbb{R}^{n_v}$ is the exogenous disturbance, and $\mathbf{u}\in\mathbb{R}^{n_u}$ is the control input. 

The controller considered is a finite dimensional LTI system described as
\begin{align}
\mathcal{C}:\left\{\left[\begin{array}{c}\dot{\mathbf{x}}_c\\\mathbf{u}\end{array}\right] = \left[\begin{array}{c|c}A_c&B_c\\\hline C_c&D_c\end{array}\right]\left[\begin{array}{c}\mathbf{x}_c\\\mathbf{y}_m\end{array}\right]\right.,
\end{align}
where $A_c,B_c,C_c,D_c$ are appropriately dimensioned parameters to be found. The controller input $\mathbf{y}_m$ and the output $\mathbf{u}$ matches the measured output and control input respectively of the LTI system $H_L$.

Consequently the closed loop system is given by
\begin{align}
H_{cl}:\left\{\left[\begin{array}{c}\dot{\mathbf{x}}_{cl}\\\mathbf{y}\end{array}\right] = \left[\begin{array}{c|c}A_{cl}&B_{cl}\\\hline C_{cl}&D_{cl}\end{array}\right]\left[\begin{array}{c}\mathbf{x}_{cl}\\\mathbf{v}\end{array}\right]\right.,
\end{align}
where

\begin{align}
\left[\begin{array}{c|c}A_{cl}&B_{cl}\\\hline C_{cl}&D_{cl}\end{array}\right] = \left[\begin{array}{cc|c}A&0&B_1\\0&0&0\\\hline C_1&0&D_{11}\end{array}\right] + \left[\begin{array}{cc}0&B_2\\I&0\\\hline 0&D_{12}\end{array}\right]\left[\begin{array}{c|c}A_{c}&B_{c}\\\hline C_{c}&D_{c}\end{array}\right]\left[\begin{array}{cc|c}0&I&0\\C_2&0&D_{21}\end{array}\right].
\end{align}
Given a static multiplier $X=X^\top$, it is straightforward to see that the condition $H_{cl}\in\text{IQC}(X)$ is equivalent to the existence of a matrix $P\in\mathbb{S}_+$ such that 

\begin{align}
\left[\begin{array}{c}I\\\left[\begin{array}{cc}A_{cl}&B_{cl}\\C_{cl}&D_{cl}\end{array}\right]\end{array}\right]^\top\left[\begin{array}{c|c}\begin{array}{cc}0&0\\0&-X_{11}\end{array}&\begin{array}{cc}P&0\\0&-X_{12}\end{array}\\\hline\begin{array}{cc}P&0\\0&-X_{12}^\top\end{array}&\begin{array}{cc}0&0\\0&-X_{22}\end{array}\end{array}\right]\left[\begin{array}{c}I\\\left[\begin{array}{cc}A_{cl}&B_{cl}\\C_{cl}&D_{cl}\end{array}\right]\end{array}\right]\preceq0.
\end{align}
The corresponding synthesis problem is then given, by including a parameter dependence on the multiplier, as $P_{H_{cl}}(X(\gamma);\gamma^2) $. The following remarks comment on the feasibility and the solution approaches for $P_{H_{cl}}(X(\gamma);\gamma^2)$.
\renewcommand*{\arraycolsep}{2pt}

\begin{remark}[Feasibility] 
Let
\begin{align*}
X^{-1} = \left[\begin{array}{cc}\widetilde{X}_{11}&\widetilde{X}_{12}\\\widetilde{X}_{12}^\top&\widetilde{X}_{22}\end{array}\right].
\end{align*}
It is well known ( see \cite{scherer2000linear} Chap. 4, \cite{scorletti1998improved} and references therein), using elimination lemma that $H_{cl}\in\text{IQC}(X)$ if and only if there exists $Q_1,Q_2\in\mathbb{S}_+$ such that the following hold
\begin{align}
\begin{array}{c}
\left[\begin{array}{cc}Q_1&I\\I&Q_2\end{array}\right]\succeq0,\\\\
U^\top\left[\begin{array}{c}I\\\left[\begin{array}{cc}A&B_1\\C_1&D_{11}\end{array}\right]\end{array}\right]^\top\left[\begin{array}{c|c}\begin{array}{cc}0&0\\0&-X_{11}\end{array}&\begin{array}{cc}Q_1&0\\0&-X_{12}\end{array}\\\hline\begin{array}{cc}Q_1&0\\0&-X_{12}^\top\end{array}&\begin{array}{cc}0&0\\0&-X_{22}\end{array}\end{array}\right]\left[\begin{array}{c}I\\\left[\begin{array}{cc}A&B_1\\C_1&D_{11}\end{array}\right]\end{array}\right]U\preceq0,\\\\
V^\top\left[\begin{array}{c}I\\\left[\begin{array}{cc}A&B_1\\C_1&D_{11}\end{array}\right]^\top\end{array}\right]^\top\left[\begin{array}{c|c}\begin{array}{cc}0&0\\0&-\widetilde{X}_{11}\end{array}&\begin{array}{cc}Q_2&0\\0&-\widetilde{X}_{12}\end{array}\\\hline\begin{array}{cc}Q_2&0\\0&-\widetilde{X}_{12}^\top\end{array}&\begin{array}{cc}0&0\\0&-\widetilde{X}_{22}\end{array}\end{array}\right]\left[\begin{array}{c}I\\\left[\begin{array}{cc}A&B_1\\C_1&D_{11}\end{array}\right]^\top\end{array}\right]V\preceq0,
\end{array}
\end{align}
where 
\begin{align*}
U = \left[\begin{array}{c}C_2\\D_{21}\end{array}\right]^\perp\text{ and }V = \left[\begin{array}{c}B_2\\D_{12}\end{array}\right]^\perp.
\end{align*}
\end{remark}

\begin{remark}[Solution]
There are many ways to compute the solution of $P_{H_{cl}}(X(\gamma);\gamma^2)$. One simple method proceeds as follows; First, use bisection \cite{boyd1989bisection} to minimize the sub-optimality level $\gamma$. Suppose the minimum value of $\gamma$ is achieved by $Q_1^*$ and $Q_2^*$. Compute non-singular $R_1$ and $R_2$ with $R_1R_2^\top=I-Q_2^*Q_1^*$, and determine $P$ by solving
\begin{align}
\left[\begin{array}{cc}Q_1^*&R_2\\I&0\end{array}\right]P = \left[\begin{array}{cc}I&0\\Q_2^*&R_1\end{array}\right].
\end{align}
Once $P$ has been determined, the condition $H_{cl}\in\text{IQC}(X)$ is linear in the parameters $A_c,B_c,C_c,D_c$ and a feasible point can easily be determined using available efficient convex optimization routines. Other regularization constraints can also be added to improve the conditioning of the resulting controller, or even determine the order of the controller. See \cite{scherer2000linear} for more details on the different ways to this computation.
\end{remark}

\section{Problem Definition}\label{Problem Definition}

\begin{figure}[ht]
\begin{center}
\begin{subfigure}[b]{0.45\linewidth}
\centering
\begin{tikzpicture}[line width = 1.5pt,auto,>=latex,node distance=1.6cm]
\tikzstyle{block}=[draw,rectangle,minimum width=2.5em, minimum height=2.5em]
\def\x{1.0}		
\def\y{0.2}		
\def\z{1.4}		
\node [block](M){$M$};
\node [block,above of=M](H){$\begin{matrix}
\,{H}_1 & &\\[-0.15em]&\ddots&\\[-0.15em]& & {H}_N \end{matrix}$};
\draw [<-] (M.east)+(0,\y) -- +(\x,\y) |- node[swap,pos=0.25]{$\mathbf{y}$} (H);
\draw [<-] (M.east)+(0,-\y) -- +(\z,-\y) node [anchor=west]{$\mathbf{w}$};
\draw [->] (M.west)+(0,\y) -- +(-\x,\y) |- node[pos=0.25]{$\mathbf{v}$} (H);
\draw [->] (M.west)+(0,-\y) -- +(-\z,-\y) node [anchor=east]{$\mathbf{z}$};
\end{tikzpicture}
\subcaption{Global System}
\label{fig_Global}
\end{subfigure}
\begin{subfigure}[b]{0.45\linewidth}
\centering
\begin{tikzpicture}[line width = 1.5pt,auto,>=latex,node distance=1.6cm]
\tikzstyle{block}=[draw,rectangle,minimum width=4.5em, minimum height=4.5em]
\def\x{1.0}		
\def\y{0.5}		
\def\z{1.4}		
\node [block](H){\Large $H_i$};
\node [draw,rectangle, minimum width=1em, minimum height=1em,thick](X) at (1.5em,-1.6em){$\mathbf{x}_i$};
\draw [->] (H.east)+(0,0) -- +(\z,0) node [anchor=west]{$\mathbf{y}_i$};
\draw [<-] (H.west)+(0,-\y) -- +(-\z,-\y) node [anchor=east]{$\mathbf{v}_i$};
\draw [<-] (H.west)+(0,+\y) -- +(-\z,+\y) node [anchor=east]{$\mathbf{u}_i$};
\end{tikzpicture}
\subcaption{Local System}
\label{fig_Local}
\end{subfigure}
\caption{ Large-scale Interconnected System.}
\label{fig_Interconnected}
\end{center}
\end{figure}
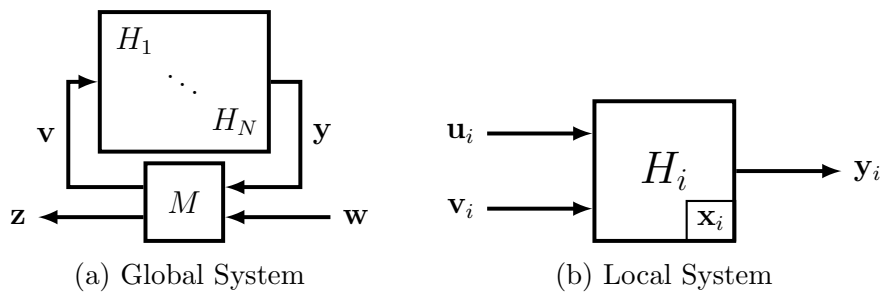

Figure~\ref{fig_Global} shows the interconnected system under consideration. The schematic for each $H_i$ block is shown in Figure~\ref{fig_Local}. The interconnection output vector $\mathbf{v}$ constitutes the exogenous inputs $\mathbf{v}_i$, $i=1,\hdots,N$ for the subsystems. On the other hand, the controlled outputs $\mathbf{y}_i$, $i=1,\hdots,N$ of all the subsystems constitute the interconnection input vector $\mathbf{y}$. The static matrix $M\triangleq\left[\begin{array}{cc}M_{11}&M_{12}\\M_{21}&M_{22}\end{array}\right]:\displaystyle \left[\begin{array}{c}\mathbf{y}\\\mathbf{w}\end{array}\right]\mapsto\left[\begin{array}{c}\mathbf{v}\\\mathbf{z}\end{array}\right]$ characterizes the interconnection topology. 
The term ``global" is used to describe the entire interconnected system as shown in Fig~\ref{fig_Global}, while ``local" refers to the individual subsystem as shown in Fig~\ref{fig_Local}. The goal of this section is to describe the synthesis problem for the global system in terms of the interconnection input-output pair $(\mathbf{w},\mathbf{z})$ and the local systems in terms of the input-output pairs $(\mathbf{v}_i,\mathbf{y}_i)$, $i=1,\hdots,N$. Moreover, to establish a relationship between the local and global syntheses problems such that the solution of the local synthesis problem provides at least a feasibility certificate for the global synthesis problem.

\begin{definition}[Global Problem]
Consider the interconnected system $G$ in Fig~\ref{fig_Global}, defined by the local subsystems $H_i$, $i=1,\hdots,N$ and the interconnection matrix $M\in\mathbb{R}^{(n_v+n_z)\times(n_y+n_w)}$. Given a global multiplier $W(\gamma)$, parametrized by $\gamma\in\mathbb{R}$, the global problem is defined as the synthesis problem $P_G(W(\gamma);\gamma^2)$ which is given by the optimization problem
\begin{align}
P_G:\left\{
\begin{array}{rl}
\text{min}&\gamma^2\\
\text{s.t}\\&\displaystyle\int_\mathbb{R}\left[\begin{array}{c}\widehat{\mathbf{w}}(j\omega)\\\widehat{\mathbf{z}}(j\omega)\end{array}\right]^*W(\gamma)\left[\begin{array}{c}\widehat{\mathbf{w}}(j\omega)\\\widehat{\mathbf{z}}(j\omega)\end{array}\right]d\omega\ge0,\hspace{2mm}\forall\hspace{2mm}\mathbf{w}\in\mathbb{L}_{2e}^{n_w}.
\end{array}
\right.
\end{align}
\end{definition}
\begin{remark}
It is straightforward to see how the definition above can be extended to encompass the syntheses problems given in \eqref{synthesis_square} and \eqref{synthesis_rectangle}. The form above is used for simplicity of exposition.
\end{remark}
\begin{definition}[Local Problem]
For the interconnected system $G$, we define a local problem as finite collection of syntheses problems associated with the subsystems $H_i$, $i = 1,\hdots, N$. In other words, given a finite collection of parametrized multipliers $\left\{X_i(\gamma)\right\}_{i=1}^N$, the local problem $P_L\left(\left\{X_i(\gamma)\right\}_{i=1}^N;\gamma^2\right)$ is given by 
\begin{align}
P_L\left(\left\{X_i(\gamma)\right\}_{i=1}^N;\gamma^2\right) = \left\{P_{H_i}\left(X_i(\gamma);\gamma^2\right)\right\}_{i=1}^N
\end{align}
where each $P_{H_i}\left(X_i(\gamma);\gamma^2\right)$ is the synthesis problem associated with each subsystem $H_i$.
\end{definition}
\begin{remark}
The local problem $P_L\left(\left\{X_i(\gamma)\right\}_{i=1}^N;\gamma^2\right)$ is feasible is if there exists at least one $\gamma_f\in\mathbb{R}$ such that all the syntheses problems $P_{H_i}\left(X_i(\gamma_f);\gamma_f^2\right)$, $i=1,\hdots,N$ are feasible. Moreover, $\gamma^* = P_L\left(\left\{X_i(\gamma)\right\}_{i=1}^N;\gamma^2\right)$ is called the solution of the local problem $P_L\left(\left\{X_i(\gamma)\right\}_{i=1}^N;\gamma^2\right)$ and is given by $\gamma^* = \max\left\{\bar{\gamma}_i\right\}_{i=1}^N$, where $\bar{\gamma}_i = P_{H_i}\left(X_i(\gamma);\gamma^2\right)$ is the solution of synthesis problem associated with the subsystem $H_i$.
\end{remark}
\begin{definition}[Global Admissibility]
The local problem $P_L\left(\left\{X_i(\gamma)\right\}_{i=1}^N;\gamma^2\right)$ is said to be globally admissible by the global problem $P_G(W(\gamma);\gamma^2)$ (or simply admissible) if the following condition holds for all parameter $\gamma\in\mathbb{R}$;
\begin{align}\label{GAC1}
\sum_{i=1}^N\left[\begin{array}{c}\mathbf{v}_i\\\mathbf{y}_i\end{array}\right]^\top X_i(\gamma)\left[\begin{array}{c}\mathbf{v}_i\\\mathbf{y}_i\end{array}\right] - \left[\begin{array}{c}\mathbf{w}\\\mathbf{z}\end{array}\right]^\top W(\gamma)\left[\begin{array}{c}\mathbf{w}\\\mathbf{z}\end{array}\right]\le0
\end{align}
for all $\left\{\mathbf{v}_i\right\}_{i=1}^N,\left\{\mathbf{y}_i\right\}_{i=1}^N,\mathbf{w},\text{ and }\mathbf{z}$.
\end{definition}
\begin{remark}
If a local problem $P_L$ is admissible by the global problem $P_G$, then the feasibility of $P_L$ is a certificate for the feasibility of $P_G$.
\end{remark}

\begin{theorem}[Global Admissibility Condition]
Given the static interconnection matrix $M\in\mathbb{R}^{(n_v+n_z)\times(n_y+n_w)}$, the local problem $P_L\left(\left\{X_i(\gamma)\right\}_{i=1}^N;\gamma^2\right)$, with each $X_i(\gamma)$ given by the conformal block,
\begin{align}
X_i(\gamma) = \left[\begin{array}{cc}X_i^{11}(\gamma)&X_i^{12}(\gamma)\\X_i^{{12}^\top}(\gamma)&X_i^{22}(\gamma)\end{array}\right]
\end{align}
is admissible by the global problem $P_G(W(\gamma);\gamma^2)$ if and only if the matrix inequality

\begin{align}\label{GAC2}
\left[\begin{array}{c}M\\I\end{array}\right]^\top\left[\begin{array}{cc|cc}X^{11}(\gamma)&&X^{12}(\gamma)&\\&-W_{22}(\gamma)&&-W_{12}^\top(\gamma)\\\hline {{X^{12}}^\top}^{\phantom{T}}(\gamma)&&X^{22}(\gamma)&\\&-W_{12}(\gamma)&&-W_{11}(\gamma)\end{array}\right]\left[\begin{array}{c}M\\I\end{array}\right]\preceq0,
\end{align}
where
\begin{align}
X^{jk}(\gamma) = \left[\begin{array}{ccc}X_1^{jk}(\gamma)&&\\&\ddots&\\&&X_N^{jk}(\gamma)\end{array}\right], \hspace{2mm}j,k\in\{1,2\},
\end{align}
is feasible for all $\gamma\in\mathbb{R}$.
\end{theorem}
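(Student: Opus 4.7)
The plan is to show that the admissibility inequality \eqref{GAC1} is literally the same statement as \eqref{GAC2} after two bookkeeping moves: collecting the subsystem quadratic forms into a single block-diagonal quadratic form, and substituting the algebraic interconnection $[\mathbf{v};\mathbf{z}]=M[\mathbf{y};\mathbf{w}]$. Since the parameter $\gamma$ plays no role in either move, the whole argument is carried out pointwise in $\gamma$.

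\textbf{Step 1 (aggregation of the local forms).} Exploiting the block-diagonal definition $X^{jk}(\gamma)=\mathrm{diag}(X_1^{jk}(\gamma),\ldots,X_N^{jk}(\gamma))$, I would first observe that, after stacking $\mathbf{v}=[\mathbf{v}_1;\ldots;\mathbf{v}_N]$ and $\mathbf{y}=[\mathbf{y}_1;\ldots;\mathbf{y}_N]$ (with the appropriate permutation that moves all $\mathbf{v}_i$'s to the top and all $\mathbf{y}_i$'s to the bottom),
\begin{align*}
\sum_{i=1}^N\left[\begin{array}{c}\mathbf{v}_i\\\mathbf{y}_i\end{array}\right]^{\!\top}\! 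X_i(\gamma)\left[\begin{array}{c}\mathbf{v}_i\\\mathbf{y}_i\end{array}\right]=\left[\begin{array}{c}\mathbf{v}\\\mathbf{y}\end{array}\right]^{\!\top}\!\left[\begin{array}{cc}X^{11}(\gamma)&X^{12}(\gamma)\\{X^{12}}^{\!\top}(\gamma)&X^{22}(\gamma)\end{array}\right]\left[\begin{array}{c}\mathbf{v}\\\mathbf{y}\end{array}\right].
\end{align*}
Expanding the right-hand side of \eqref{GAC1} in the usual $(W_{11},W_{12},W_{22})$ components, the admissibility inequality becomes
\begin{align*}
\mathbf{v}^{\!\top} X^{11}\mathbf{v}+2\mathbf{v}^{\!\top} X^{12}\mathbf{y}+\mathbf{y}^{\!\top} X^{22}\mathbf{y}-\mathbf{w}^{\!\top} W_{11}\mathbf{w}-2\mathbf{w}^{\!\top} W_{12}\mathbf{z}-\mathbf{z}^{\!\top} W_{22}\mathbf{z}\le 0.
\end{align*}

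\textbf{Step 2 (recognize the block structure of \eqref{GAC2}).} Next, I would verify by inspection that the previous expression is precisely $\xi^\top\Xi(\gamma)\xi$, where $\xi=[\mathbf{v};\mathbf{z};\mathbf{y};\mathbf{w}]$ and $\Xi(\gamma)$ is the inner matrix appearing in \eqref{GAC2}. The key point is that $X$-terms only couple positions $1$ ($\mathbf{v}$) and $3$ ($\mathbf{y}$), while $-W$-terms only couple positions $2$ ($\mathbf{z}$) and $4$ ($\mathbf{w}$); this is what gives $\Xi(\gamma)$ its sparse cross-pattern.

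\textbf{Step 3 (impose the interconnection).} The interconnection says $[\mathbf{v};\mathbf{z}]=M[\mathbf{y};\mathbf{w}]$, so the free variables are $[\mathbf{y};\mathbf{w}]$ and $\xi=\bigl[\begin{smallmatrix}M\\I\end{smallmatrix}\bigr][\mathbf{y};\mathbf{w}]$. Substituting,
\begin{align*}
\sum_{i=1}^N\left[\begin{array}{c}\mathbf{v}_i\\\mathbf{y}_i\end{array}\right]^{\!\top}\! X_i(\gamma)\left[\begin{array}{c}\mathbf{v}_i\\\mathbf{y}_i\end{array}\right]-\left[\begin{array}{c}\mathbf{w}\\\mathbf{z}\end{array}\right]^{\!\top}\! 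W(\gamma)\left[\begin{array}{c}\mathbf{w}\\\mathbf{z}\end{array}\right]=\left[\begin{array}{c}\mathbf{y}\\\mathbf{w}\end{array}\right]^{\!\top}\!\left[\begin{array}{c}M\\I\end{array}\right]^{\!\top}\!\Xi(\gamma)\left[\begin{array}{c}M\\I\end{array}\right]\left[\begin{array}{c}\mathbf{y}\\\mathbf{w}\end{array}\right].
\end{align*}
Hence \eqref{GAC1} holds for every admissible collection of signals if and only if this quadratic form is nonpositive for every $[\mathbf{y};\mathbf{w}]$, which is precisely \eqref{GAC2}. Doing this at every $\gamma\in\mathbb{R}$ yields the ``for all $\gamma$'' quantifier on both sides.

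\textbf{Anticipated difficulty.} There is no deep obstacle here; the proof is essentially a congruence transformation. The only thing to be careful about is the bookkeeping: the permutation that collects the $\mathbf{v}_i$'s and $\mathbf{y}_i$'s into block-diagonal form, and the secondary reordering from the natural $(\mathbf{v},\mathbf{y},\mathbf{w},\mathbf{z})$ layout to the $(\mathbf{v},\mathbf{z},\mathbf{y},\mathbf{w})$ layout needed to make $\bigl[\begin{smallmatrix}M\\I\end{smallmatrix}\bigr]$ act cleanly. Once one fixes these permutations and tracks signs carefully (the $-W$ in $\Xi$ comes from moving the global supply to the left-hand side), both implications fall out simultaneously because the correspondence between the two sides is an identity of quadratic forms.
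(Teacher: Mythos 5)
Your proposal is correct and follows essentially the same route as the paper: aggregate the local supply rates into the block-diagonal form, reorder the signals into the $(\mathbf{v},\mathbf{z},\mathbf{y},\mathbf{w})$ layout, substitute the interconnection constraint $[\mathbf{v};\mathbf{z}]=M[\mathbf{y};\mathbf{w}]$, and invoke the equivalence between a quadratic form being nonpositive for all $[\mathbf{y};\mathbf{w}]$ and negative semidefiniteness of the congruence $\left[\begin{smallmatrix}M\\I\end{smallmatrix}\right]^\top\Xi(\gamma)\left[\begin{smallmatrix}M\\I\end{smallmatrix}\right]$, pointwise in $\gamma$. No substantive difference from the paper's argument.
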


\begin{proof}
The global admissibility condition in \eqref{GAC1} is equivalent to the inequality
\begin{align*}
\left[\begin{array}{c}\mathbf{v}\\\mathbf{y}\end{array}\right]^\top\left[\begin{array}{cc}X^{11}(\gamma)&X^{12}(\gamma)\\ {X^{12}}^\top(\gamma)&X^{22}(\gamma)\end{array}\right]\left[\begin{array}{c}\mathbf{v}\\\mathbf{y}\end{array}\right]
 -  \left[\begin{array}{c}\mathbf{w}\\\mathbf{z}\end{array}\right]^\top\left[\begin{array}{cc}W_{11}(\gamma)&W_{12}(\gamma)\\W_{12}^\top(\gamma)&W_{22}(\gamma)\end{array}\right]\left[\begin{array}{c}\mathbf{w}\\\mathbf{z}\end{array}\right]\le0,
\end{align*}
holding true for all $\mathbf{v}=[\mathbf{v}_1^\top\hdots\mathbf{v}_N^\top]^\top,\mathbf{y}=[\mathbf{y}_1^\top\hdots\mathbf{y}_N^\top]^\top$, $\mathbf{w}\text{ and }\mathbf{z}$ satisfying the interconnection constraint. After rearranging terms, it is straight forward to see that the global admissibility condition is equivalent to
\begin{align*}
\begin{array}{c}
\left[\begin{array}{c}\mathbf{v}\\\mathbf{z}\\\hline\mathbf{y}\\\mathbf{w}\end{array}\right]^\top\left[\begin{array}{cc|cc}X^{11}(\gamma)&&X^{12}(\gamma)&\\&-W_{22}(\gamma)&&-W_{12}^\top(\gamma)\\\hline {{X^{12}}^\top}^{\phantom{T}}(\gamma)&&X^{22}(\gamma)&\\&-W_{12}(\gamma)&&-W_{11}(\gamma)\end{array}\right]\left[\begin{array}{c}\mathbf{v}\\\mathbf{z}\\\hline\mathbf{y}\\\mathbf{w}\end{array}\right]\le0,\\\\
\left[\begin{array}{c}\mathbf{v}\\\mathbf{z}\end{array}\right] = M\left[\begin{array}{c}\mathbf{y}\\\mathbf{w}\end{array}\right]
\end{array}
\end{align*}
for all $\mathbf{v},\mathbf{z},\mathbf{y}\text{ and }\mathbf{w}$.
\begin{align*}
\Leftrightarrow\left[\begin{array}{c}M\\I\end{array}\right]^\top\left[\begin{array}{cc|cc}X^{11}(\gamma)&&X^{12}(\gamma)&\\&-W_{22}(\gamma)&&-W_{12}^T(\gamma)\\\hline {{X^{12}}^\top}^{\phantom{T}}(\gamma)&&X^{22}(\gamma)&\\&-W_{12}(\gamma)&&-W_{11}(\gamma)\end{array}\right]\left[\begin{array}{c}M\\I\end{array}\right]\preceq0.
\end{align*}
\end{proof}

Using the global admissibility condition, a relaxed optimization problem for the global synthesis problem can be written in terms of the local problem as follows

\begin{definition}[Relaxed Global Problem]
Consider the interconnected system $G$ in Fig~\ref{fig_Global}, defined by the local subsystems $H_i$, $i=1,\hdots,N$ and the interconnection matrix $M\in\mathbb{R}^{(n_v+n_z)\times(n_y+n_w)}$. Given a global multiplier $W(\gamma)$, parametrized by $\gamma\in\mathbb{R}$, the relaxed global problem is defined as the synthesis problem $\overline{P}_G(W(\gamma);\gamma^2)$ which is given by the optimization problem

\begin{align}\label{global_relaxed}
\overline{P}_G:\left\{
\begin{array}{rl}
\text{min}&\gamma^2\\
\text{s.t}\\
&\left[\begin{array}{c}M\\I\end{array}\right]^\top\left[\begin{array}{cc|cc}Z^{11}&&Z^{12}&\\&-W_{22}(\gamma)&&-W_{12}^\top(\gamma)\\\hline {{Z^{12}}^\top}^{\phantom{T}}&&Z^{22}&\\&-W_{12}(\gamma)&&-W_{11}(\gamma)\end{array}\right]\left[\begin{array}{c}M\\I\end{array}\right]\preceq0,\\\\
&\hspace{3mm}Z^{11}\succeq0,Z^{22}\preceq0,\\\\
&\hspace{3mm}H_i\in\text{IQC}(Z_i), \hspace{2mm}i=1,\hdots,N.
\end{array}
\right.
\end{align}
\end{definition}

\begin{remark}[Solution]
Similar to the approach used in \cite{meissen2014compositional}, a solution method based on the alternating direction method of multipliers (ADMM)\cite{boyd2011distributed} is considered. ADMM belongs to a category of operator splitting techniques. It solves convex optimization problems by breaking them into smaller pieces, each of which are then easier to handle. At each iterative step ($k$-th step), the solution implements the following Gauss-Seidel sweep:

\begin{enumerate}
	\item {$\left\{Z_i^{k+1}\right\}_{i=1}^N = \arg\min P_Z\left(\left\{X_i^k\right\}_{i=1}^N\right)$, where
	\begin{align}
  P_Z\triangleq\left\{
\begin{array}{rl}
\text{min}&\gamma^2 + \displaystyle\sum_{i=1}^N\left\|X_i^k-Z_i+V_i^k\right\|_F^2\\
\text{s.t}\\
&\left[\begin{array}{c}M\\I\end{array}\right]^\top\left[\begin{array}{cc|cc}Z^{11}&&Z^{12}&\\&-W_{22}(\gamma)&&-W_{12}^T(\gamma)\\\hline {{Z^{12}}^\top}^{\phantom{T}}&&Z^{22}&\\&-W_{12}(\gamma)&&-W_{11}(\gamma)\end{array}\right]\left[\begin{array}{c}M\\I\end{array}\right]\preceq0,\\\\
&\hspace{3mm}Z^{11}\succeq0,Z^{22}\preceq0.
\end{array}
\right.
	\end{align}
	}
	\item {$\left\{X_i^{k+1}\right\}_{i=1}^N = \arg\min P_X\left(\left\{Z_i^{k+1}\right\}_{i=1}^N\right)$, where
	\begin{align}
	P_X\triangleq\left\{
\begin{array}{rl}
\text{min}&\displaystyle\sum_{i=1}^N\left\|X_i-Z_i^{k+1}+V_i^k\right\|_F^2\\
\text{s.t}\\
&\hspace{3mm}H_i\in\text{IQC}(Z_i), \hspace{2mm}i=1,\hdots,N.
\end{array}
\right.
	\end{align}
	}
	\item{Dual Ascent Update
	\begin{align}
	V_i^{k+1} \leftarrow V_i^k + X_i^{k+1} - Z_i^{k+1}.
	\end{align}
	}
\end{enumerate}
It is clear that step 2 is a candidate for parallel implementation on a distributed processor. As a result, the ADMM solution steps described above can be interpreted as a method for decentralized control design. 
\end{remark}

Ultimately, we would like to translate the synthesis problem associated with the global problem to a corresponding set of syntheses problems associated with a globally admissible local problem. However, it is not clear  how this can be done at this point.  

Moreover, the relaxed global problem in \eqref{global_relaxed} incorporates the local synthesis problem. In order words, the solution of the global problem requires at least the feasibility conditions for the local problem. This makes it unsuitable for cases where one is interested in designing performance specifications for the local subsystems using only the interconnection and global objectives, without necessarily solving the local problem. 

These shortcomings are due to the general nature of the parameter dependence in the global admissibility condition above. It turns out that, by imposing a particular parametric structure on the global and local multipliers, the global admissibility condition can be made parameter-free. Hence, a much nicer decoupling of the global and local problems can be obtained. The following corollary examines the parametrization that is exploited in this paper.

\begin{corollary}[Well-posed Interconnection]
Suppose the interconnection defined by $M\triangleq\left[\begin{array}{cc}M_{11}&M_{12}\\M_{21}&M_{22}\end{array}\right]:\displaystyle \left[\begin{array}{c}\mathbf{y}\\\mathbf{w}\end{array}\right]\mapsto\left[\begin{array}{c}\mathbf{v}\\\mathbf{z}\end{array}\right]$ is such that $M_{12}^TM_{12}^{\phantom{T}}$ and $M_{21}^{\phantom{T}}M_{21}^T$ are invertible, then the global admissibility condition in \eqref{GAC2} is equivalent to
\begin{align}\label{GAC3}
\left[\begin{array}{c|c}M_{12}^\top X^{11}(\gamma)M_{12}-W_{11}(\gamma)&M_{12}^\top X^{12}(\gamma)-W_{12}(\gamma)M_{21}\\\hline {\left(M_{12}^\top X^{12}(\gamma)-W_{12}(\gamma)M_{21}\right)^\top}^{\phantom{T}}&X^{22}(\gamma)-M_{21}^\top W_{22}(\gamma)M_{21}\end{array}\right]\preceq0
\end{align}
\end{corollary}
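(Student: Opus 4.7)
The plan is to reduce (GAC2) to a pointwise quadratic inequality in $(\mathbf{y},\mathbf{w})$ by eliminating $\mathbf{v}$ and $\mathbf{z}$ through the interconnection equations, and then to identify the resulting matrix block-by-block with the matrix appearing in (GAC3). The preceding theorem already recasts (GAC2) as the statement that
\[
\mathbf{v}^\top X^{11}\mathbf{v}+2\mathbf{v}^\top X^{12}\mathbf{y}+\mathbf{y}^\top X^{22}\mathbf{y}-\mathbf{w}^\top W_{11}\mathbf{w}-2\mathbf{w}^\top W_{12}\mathbf{z}-\mathbf{z}^\top W_{22}\mathbf{z}\le 0
\]
holds for every $(\mathbf{y},\mathbf{w})$, where $\mathbf{v}=M_{11}\mathbf{y}+M_{12}\mathbf{w}$ and $\mathbf{z}=M_{21}\mathbf{y}+M_{22}\mathbf{w}$. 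The proof is therefore a piece of structured block-matrix algebra: substitute, expand, and match.

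Concretely, I would first carry out the substitution in each of the six summands and then collect the coefficients of $\mathbf{w}^\top(\cdot)\mathbf{w}$, $\mathbf{w}^\top(\cdot)\mathbf{y}$, and $\mathbf{y}^\top(\cdot)\mathbf{y}$ to obtain a matrix $\Lambda$ such that the pointwise inequality becomes $\begin{bmatrix}\mathbf{w}\\\mathbf{y}\end{bmatrix}^\top\Lambda\begin{bmatrix}\mathbf{w}\\\mathbf{y}\end{bmatrix}\le 0$. The three blocks that must match (GAC3) are
\[
\Lambda_{ww}=M_{12}^\top X^{11}M_{12}-W_{11},\qquad \Lambda_{wy}=M_{12}^\top X^{12}-W_{12}M_{21},\qquad \Lambda_{yy}=X^{22}-M_{21}^\top W_{22}M_{21}.
\]
Once $\Lambda$ is identified with the (GAC3) matrix, the equivalence of the matrix inequalities is immediate since both express non-positivity of the same quadratic form.

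The main obstacle is precisely that an unstructured expansion also produces cross-contributions involving $M_{11}$ and $M_{22}$---for instance, terms like $M_{11}^\top X^{11}M_{11}+M_{11}^\top X^{12}+{X^{12}}^\top M_{11}$ appearing in the $(\mathbf{y},\mathbf{y})$ block and $-W_{12}M_{22}-M_{22}^\top W_{12}^\top-M_{22}^\top W_{22}M_{22}$ in the $(\mathbf{w},\mathbf{w})$ block---which must be absorbed or eliminated to reach (GAC3). This is where the well-posedness hypothesis does the real work: left-invertibility of $M_{12}$ (guaranteed by the invertibility of $M_{12}^\top M_{12}$) and right-invertibility of $M_{21}$ (via $M_{21}M_{21}^\top$) furnish the pseudoinverses $M_{12}^\dagger=(M_{12}^\top M_{12})^{-1}M_{12}^\top$ and $M_{21}^\dagger=M_{21}^\top(M_{21}M_{21}^\top)^{-1}$, which I would use to define a congruence transformation on $(\mathbf{y},\mathbf{w})$ under which the $M_{11}$- and $M_{22}$-driven components act on the subspaces orthogonal to $\mathrm{range}(M_{12})$ and $\mathrm{range}(M_{21}^\top)$ and therefore drop out of the reduced inequality. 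Verifying this collapse rigorously and checking the resulting block identities is the principal technical step; the converse implication is then obtained by reading the same identities in reverse, using that the substitution $(\mathbf{y},\mathbf{w})\mapsto(\mathbf{v},\mathbf{z},\mathbf{y},\mathbf{w})$ is a surjection onto the admissible interconnection tuples.
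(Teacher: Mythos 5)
Your setup is the right one (it is essentially how the paper frames the question), but the argument stops exactly where the content lies. Substituting $\mathbf{v}=M_{11}\mathbf{y}+M_{12}\mathbf{w}$, $\mathbf{z}=M_{21}\mathbf{y}+M_{22}\mathbf{w}$ into the quadratic form of \eqref{GAC1} yields a matrix $\Lambda$ in $(\mathbf{w},\mathbf{y})$ whose blocks are \emph{not} those of \eqref{GAC3}: as you yourself note, the $(\mathbf{y},\mathbf{y})$ block is $M_{11}^\top X^{11}M_{11}+M_{11}^\top X^{12}+{X^{12}}^\top M_{11}+X^{22}-M_{21}^\top W_{22}M_{21}$ and the $(\mathbf{w},\mathbf{w})$ block picks up $-W_{12}M_{22}-M_{22}^\top W_{12}^\top-M_{22}^\top W_{22}M_{22}$. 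Hence a block-by-block identification of $\Lambda$ with the matrix in \eqref{GAC3} is impossible whenever $M_{11}$ or $M_{22}$ is nonzero; the two LMIs can at best be related through an invertible congruence, and exhibiting that congruence is precisely the step you defer (``verifying this collapse rigorously \dots is the principal technical step''). The mechanism you gesture at --- that the $M_{11}$- and $M_{22}$-driven contributions live on subspaces orthogonal to $\mathrm{range}(M_{12})$ and $\mathrm{range}(M_{21}^\top)$ and therefore ``drop out'' --- is unjustified and is not what happens: nothing in the hypotheses annihilates a term like $M_{11}^\top X^{11}M_{11}$ by orthogonality, and indeed already in the scalar case $M_{12}=M_{21}=1$, $M_{11}\neq 0$ the matrices $\Lambda$ and \eqref{GAC3} genuinely differ.

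The paper's own proof handles these terms multiplicatively, not by cancellation: it factors the interconnection as $M=\bigl[\begin{smallmatrix}0&M_{12}\\ M_{21}&0\end{smallmatrix}\bigr]\bigl[\begin{smallmatrix}I&M_{21}^\dagger M_{22}\\ M_{12}^\dagger M_{11}&I\end{smallmatrix}\bigr]$ with $M_{12}^\dagger=(M_{12}^\top M_{12})^{-1}M_{12}^\top$ and $M_{21}^\dagger=M_{21}^\top(M_{21}M_{21}^\top)^{-1}$, and then performs a congruence of \eqref{GAC2} with (the inverse of) the right factor, so that all $M_{11},M_{22}$ dependence is absorbed into a change of variables on $(\mathbf{y},\mathbf{w})$; expanding the transformed inequality then gives \eqref{GAC3}. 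Your proposal never constructs this (or any) transformation, so the central equivalence remains unsupported; the converse direction, which you say follows by ``reading the identities in reverse,'' rests on the same unproven identities. If you rebuild the argument along the paper's lines, you should also make explicit the facts this factorization quietly uses --- $\mathrm{range}(M_{11})\subseteq\mathrm{range}(M_{12})$ so that $M_{12}M_{12}^\dagger M_{11}=M_{11}$, and invertibility of the right factor so that the congruence is admissible --- since the stated hypotheses ($M_{12}^\top M_{12}$ and $M_{21}M_{21}^\top$ invertible) do not by themselves supply them.
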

\begin{proof}
Observe that since $M_{12}^\top M_{12}^{\phantom{T}}$ and $M_{21}^{\phantom{T}}M_{21}^\top$ are invertible, the interconnection matrix can be factorized as
\begin{align*}
\left[\begin{array}{cc}M_{11}&M_{12}\\M_{21}&M_{22}\end{array}\right] = \left[\begin{array}{cc}0&M_{12}\\M_{21}&0\end{array}\right]\left[\begin{array}{cc}I&M_{21}^\dagger M_{22}\\M_{12}^\dagger M_{11}&I\end{array}\right]
\end{align*}
where $M_{12}^\dagger\triangleq\left(M_{12}^\top M_{12}\right)^{-1}M_{12}^\top$ and $M_{21}^\dagger\triangleq M_{21}^\top\left(M_{21}M_{21}^\top\right)^{-1}$.

Pre- and post-multiplying the right hand side of the global admissibility condition in \eqref{GAC2} by $P^\top$ and $P$ respectively,
where
\begin{align*}
P = \left[\begin{array}{c|c}\left[\begin{array}{cc}I&M_{21}^\dagger M_{22}\\M_{12}^\dagger M_{11}&I\end{array}\right]^{-1}&\\\hline&\hspace{2mm}\displaystyle I^{\phantom{T^\top}}\hspace{2mm}\end{array}\right]
\end{align*}
yields
\begin{align*}
\left[\begin{array}{c}\left[\begin{array}{cc}0&M_{12}\\M_{21}&0\end{array}\right]\\I\end{array}\right]^\top\left[\begin{array}{cc|cc}X^{11}(\gamma)&&X^{12}(\gamma)&\\&-W_{22}(\gamma)&&-W_{12}^\top(\gamma)\\\hline {{X^{12}}^\top}^{\phantom{T}}(\gamma)&&X^{22}(\gamma)&\\&-W_{12}(\gamma)&&-W_{11}(\gamma)\end{array}\right]\left[\begin{array}{c}\left[\begin{array}{cc}0&M_{12}\\M_{21}&0\end{array}\right]\\I\end{array}\right]\preceq0,
\end{align*}
which after expanding and reordering the matrix elements becomes 
\begin{align*}
\left[\begin{array}{c|c}M_{12}^\top X^{11}(\gamma)M_{12}-W_{11}(\gamma)&M_{12}^\top X^{12}(\gamma)-W_{12}(\gamma)M_{21}\\\hline {\left(M_{12}^\top X^{12}(\gamma)-W_{12}(\gamma)M_{21}\right)^\top}^{\phantom{T}}&X^{22}(\gamma)-M_{21}^\top W_{22}(\gamma)M_{21}\end{array}\right]\preceq0
\end{align*}
\end{proof}
\begin{remark}
The requirements that $M_{12}^\top M_{12}^{\phantom{T}}$ and $M_{21}^{\phantom{T}}M_{21}^\top$ be invertible defines the \emph{well-posedness} of the interconnection namely; the pair $(\mathbf{v},\mathbf{y})$ is uniquely defined by $(\mathbf{w},\mathbf{z})$ and vice versa.
\end{remark}
\begin{remark}[Local Passivability]
Consider a passivity global objective, i.e $W = \left[\begin{array}{cc}0&I\\I&0\end{array}\right]$. It straight forward to see that, if the interconnection is well-posed as defined above, the interconnected system is locally \emph{passivable} if and only if there exists a block diagonal matrix $D$ such that
\begin{align}
M_{12}^\top D-M_{21}=0.
\end{align}
Equivalently, the interconnected system is locally \emph{passivable} if and only if the matrix $M_{12}\left(M_{12}^\top M_{12}\right)^{-1}M_{21}$ is block diagonal.
\end{remark}
\begin{corollary}[Quadratic Parametrization]
Suppose the global supply rate $W(\gamma)$ is quadratically depends on the parameter $\gamma$ as follows
\begin{align}\label{W_para}
W(\gamma) = \gamma^2 W_1 + 2\gamma W_2 + W_3.
\end{align}
If the local supply rates are equivalently parametrized as
\begin{align}\label{X_para}
X_i(\gamma) = \gamma^2 X_{i_1} + 2\gamma X_{i_2} + X_{i_3},
\end{align}
then given the static interconnection matrix $M\in\mathbb{R}^{(n_v+n_z)\times(n_y+n_w)}$, the local problem $P_L\left(\left\{X_i(\gamma)\right\}_{i=1}^N;\gamma^2\right)$ is admissible by the global problem $P_G(W(\gamma);\gamma^2)$ if and only if the linear matrix inequality
\begin{align}\label{GAC4}
Q_1^\top Y_LQ_1 - Q_2^\top Y_GQ_2\preceq0,
\end{align}
is feasible, where
\begin{align}
Q_1 &= I_{2\times2}\otimes\left[\begin{array}{cc}M_{12}&M_{11}\\0&I\end{array}\right],\hspace{2mm}
Q_2 = I_{2\times2}\otimes\left[\begin{array}{cc}I&0\\M_{22}&M_{21}\end{array}\right]\\\nonumber\\
Y_L(X) &= \left[\begin{array}{c|c}\begin{array}{ccc}X_{1_1}&&\\&\ddots&\\&&X_{N_1}\end{array}&\begin{array}{ccc}X_{1_2}&&\\&\ddots&\\&&X_{N_2}\end{array}\\\hline \begin{array}{ccc}{X_{1_2}^\top}^{\phantom{T}}&&\\&\ddots&\\&&X_{N_2}^\top\end{array}&\begin{array}{ccc}X_{1_3}&&\\&\ddots&\\&&X_{N_3}\end{array}\end{array}\right],\hspace{2mm}
Y_G(W) =\left[\begin{array}{c|c}W_1&W_2\\\hline {W_2^\top}^{\phantom{T}}&W_3\end{array}\right]
\end{align}
\end{corollary}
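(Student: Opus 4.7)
The strategy is to substitute the quadratic parametrizations of $W(\gamma)$ and $X_i(\gamma)$ into the Global Admissibility Condition \eqref{GAC2} from the preceding theorem and then to reformulate the resulting ``for all $\gamma\in\mathbb{R}$'' matrix polynomial inequality as a single LMI through a $\gamma$-homogenization. The Kronecker structure $Q_j = I_{2\times 2}\otimes\hat{Q}_j$ of the interconnection matrices and the $2\times 2$ outer block structure of $Y_L(X)$ and $Y_G(W)$ are precisely designed to encode this homogenization.

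First I would substitute $X_i(\gamma) = \gamma^2 X_{i_1} + 2\gamma X_{i_2} + X_{i_3}$ and $W(\gamma) = \gamma^2 W_1 + 2\gamma W_2 + W_3$ into \eqref{GAC1} and collect terms by powers of $\gamma$. This yields the requirement
\[
\gamma^2\, L_1(\mathbf{w},\mathbf{y}) + 2\gamma\, L_2(\mathbf{w},\mathbf{y}) + L_3(\mathbf{w},\mathbf{y}) \le 0 \quad \forall\,\gamma\in\mathbb{R},\ \forall\,(\mathbf{w},\mathbf{y}),
\]
where $L_k(\mathbf{u}) = \mathbf{u}^\top \mathcal{L}_k \mathbf{u}$ with $\mathbf{u}=[\mathbf{w};\mathbf{y}]$, and each $\mathcal{L}_k$ is the symmetric matrix inherited from $X_{i_k}$ and $W_k$ after eliminating $(\mathbf{v},\mathbf{z})$ via the interconnection $[\mathbf{v};\mathbf{z}]=M[\mathbf{y};\mathbf{w}]$. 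Using the elementary identity
\[
\gamma^2 L_1 + 2\gamma L_2 + L_3 = \begin{bmatrix}\gamma\mathbf{u}\\\mathbf{u}\end{bmatrix}^\top \begin{bmatrix}\mathcal{L}_1 & \mathcal{L}_2\\ \mathcal{L}_2^\top & \mathcal{L}_3\end{bmatrix}\begin{bmatrix}\gamma\mathbf{u}\\\mathbf{u}\end{bmatrix},
\]
the parametric inequality is lifted to the single block LMI $\bigl[\begin{smallmatrix}\mathcal{L}_1 & \mathcal{L}_2\\\mathcal{L}_2^\top & \mathcal{L}_3\end{smallmatrix}\bigr]\preceq 0$.

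Next I would identify this lifted block matrix with $Q_1^\top Y_L Q_1 - Q_2^\top Y_G Q_2$. Using the factorization $[\mathbf{v};\mathbf{y}]=\hat{Q}_1[\mathbf{w};\mathbf{y}]$ and $[\mathbf{w};\mathbf{z}]=\hat{Q}_2[\mathbf{w};\mathbf{y}]$ for the inner blocks $\hat{Q}_1=\bigl[\begin{smallmatrix}M_{12}&M_{11}\\0&I\end{smallmatrix}\bigr]$ and $\hat{Q}_2=\bigl[\begin{smallmatrix}I&0\\M_{22}&M_{21}\end{smallmatrix}\bigr]$, each $\mathcal{L}_k$ expands as $\hat{Q}_1^\top\tilde{X}_k\hat{Q}_1 - \hat{Q}_2^\top W_k\hat{Q}_2$, where $\tilde{X}_k$ collects the local coefficients $\{X_{i_k}\}_{i=1}^N$ in block-diagonal form. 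The Kronecker lift $Q_j = I_{2\times 2}\otimes\hat{Q}_j$ together with the $2\times 2$ outer block arrangement of $Y_L$ and $Y_G$ then reproduces the claimed LMI $Q_1^\top Y_L Q_1 - Q_2^\top Y_G Q_2 \preceq 0$, modulo a permutation that interleaves the subsystem index and the $\gamma$-index.

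The main obstacle is the lifting step. The $(\Leftarrow)$ direction, that the LMI $\bigl[\begin{smallmatrix}\mathcal{L}_1 & \mathcal{L}_2\\\mathcal{L}_2^\top & \mathcal{L}_3\end{smallmatrix}\bigr]\preceq 0$ implies admissibility, is immediate by restricting the quadratic form to the rank-one section $\{[\gamma\mathbf{u};\mathbf{u}]:\gamma\in\mathbb{R}\}$. The $(\Rightarrow)$ direction is more delicate because the LMI constrains the bilinear form on the full product space while admissibility only constrains its restriction to this section; I would close the gap by invoking the losslessness of the scalar-parameter S-procedure (equivalently, Finsler's lemma applied to the $\gamma$-homogenization). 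Once the lifting is justified, the remainder is routine block-matrix bookkeeping confirming that the Kronecker plus block-diagonal structure aligns with the definitions of $Q_j$, $Y_L$, and $Y_G$.
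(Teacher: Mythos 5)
Your route is essentially the paper's: substitute the quadratic parametrizations into the admissibility condition of the Global Admissibility Theorem (equivalently, eliminate $(\mathbf{v},\mathbf{z})$ through $\left[\begin{smallmatrix}M_{12}&M_{11}\\0&I\end{smallmatrix}\right]$ and $\left[\begin{smallmatrix}I&0\\M_{22}&M_{21}\end{smallmatrix}\right]$), homogenize in $\gamma$ via the lifted vector $[\gamma\mathbf{u};\mathbf{u}]$ (the paper does this at the matrix level with $[\gamma I;\,I]$), and identify the resulting coefficient blocks with $Q_1^\top Y_L Q_1 - Q_2^\top Y_G Q_2$; the bookkeeping with the Kronecker structure matches the paper's displayed computation.

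The one point of divergence is the final step. The paper simply writes $[\gamma I;\,I]^\top\bigl(Q_1^\top Y_LQ_1-Q_2^\top Y_GQ_2\bigr)[\gamma I;\,I]\preceq0$ for all $\gamma$ ``$\Leftrightarrow$'' $Q_1^\top Y_LQ_1-Q_2^\top Y_GQ_2\preceq0$ with no further argument, whereas you correctly flag that only the sufficiency direction is immediate and that necessity needs justification. However, the tool you invoke does not close it: the lifted constraint set $\{[\gamma\mathbf{u};\mathbf{u}]:\gamma\in\mathbb{R},\ \mathbf{u}\}$ is the cone of pairs of \emph{parallel} vectors, which is not a linear subspace (so Finsler's lemma does not apply directly) and is cut out by a whole family of quadratic parallelism constraints rather than a single one (so losslessness of the one-constraint S-procedure is not available). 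Negativity of a quadratic form on that thin cone does not in general imply negativity on the whole space, so as written your ``$(\Rightarrow)$'' remains unproven; to be fair, the paper's own proof asserts exactly this equivalence without proof, so your attempt is no less complete than the source, but the specific lemma you name would not supply the missing argument.
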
 
\begin{proof}
First, observe that the global admissibility condition is equivalent to
\begin{align*}
\left[\begin{array}{cc}M_{12}&M_{11}\\0&I\end{array}\right]^\top\left[\begin{array}{cc}X^{11}(\gamma)&X^{12}(\gamma)\\X^{21}(\gamma)&X^{22}(\gamma)\end{array}\right]\left[\begin{array}{cc}M_{12}&M_{11}\\0&I\end{array}\right]\hspace{4cm}\\ - \left[\begin{array}{cc}I&0\\M_{22}&M_{21}\end{array}\right]^\top W(\gamma)\left[\begin{array}{cc}I&0\\M_{22}&M_{21}\end{array}\right]\preceq0,\hspace{2mm}\forall \gamma\in\mathbb{R}.
\end{align*}
Next, substituting the parametrization in \eqref{W_para} and \eqref{X_para} yields
\begin{align*}
\left[\begin{array}{c}\gamma I\\I\end{array}\right]^\top\left(Q_1^\top Y_LQ_1 - Q_2^\top Y_GQ_2\right)\left[\begin{array}{c}\gamma I\\I\end{array}\right]\preceq0,\hspace{2mm}\forall \gamma\in\mathbb{R}
\end{align*}
\begin{align*}
\Leftrightarrow Q_1^\top Y_LQ_1 - Q_2^\top Y_GQ_2\preceq0.
\end{align*}
\end{proof}
\begin{remark}
Given the interconnection matrix $M$ and the global supply rate parametrized as in \eqref{W_para}, solving \eqref{GAC4} for any feasible $Y_L$ satisfying $X^{11}\succeq0,\overline{X}^{11}\succeq0,X^{22}\preceq0$ yields a globally admissible local problem $P_L\left(\left\{X_i(\gamma)\right\}_{i=1}^N;\gamma^2\right)$. This process can be interpreted as a \emph{supply rate allocation technique}, in which performance specifications are prescribed for each subsystem such that the global performance is guaranteed for any quasi-convex performance level. This structure is exploited in the next section to analyze the achievable performance levels by the set of all globally admissible local problems.
\end{remark}
\begin{remark}
If the interconnection is well-posed as defined above, a more structured quadratic parametrization can be defined, viz.
\begin{align}\label{W_para2}
W(\gamma) = \left[\begin{array}{c|c}\gamma^2W_{11}+\overline{W}_{11}&2\gamma W_{12}+\overline{W}_{12}\\\hline {\left(2\gamma W_{12}+\overline{W}_{12}\right)^\top}^{\phantom{T}}& \overline{W}_{22}\end{array}\right],
\end{align}
and
\begin{align}\label{X_para2}
X_i(\gamma) = \left[\begin{array}{c|c}\gamma^2 X_i^{11}+\overline{X}_i^{11}&2\gamma X_i^{12}+\overline{X}_i^{12}\\\hline{{\left(2\gamma X_i^{12}+\overline{X}_i^{12}\right)}^\top}^{\phantom{T}}&\overline{X}_i^{22}\end{array}\right].
\end{align}
Thus the global admissible condition becomes
\begin{align}
\left[\begin{array}{c|c}\begin{array}{cc}M_{12}^\top X^{11}M_{12}-W_{11}&\\&0\end{array}&\begin{array}{cc}&M_{12}^\top X^{12}-W_{12}M_{21}\\{\left(M_{12}^\top X^{12}-W_{12}M_{21}\right)^\top}^{\phantom{T}}\end{array}\\\hline *&{\begin{array}{c|c}M_{12}^\top\overline{X}^{11}M_{12}-\overline{W}_{11}&M_{12}^\top\overline{X}^{12}-\overline{W}_{12}M_{21}\\\hline {\left(M_{12}^\top\overline{X}^{12}-\overline{W}_{12}M_{21}\right)^\top}^{\phantom{T}}&\overline{X}^{22}-M_{21}^\top\overline{W}_{22}M_{21}\end{array}}^{\phantom{T}}\end{array}\right]\preceq0
\end{align}
\end{remark}
\section{Localization}\label{Localization}

The term \emph{localization} is used to describe local problems obtained from a global problem in a way that certain global properties are ascertained.

\begin{definition}[Localization]
Given an interconnection defined by $M$ and a global problem $P_G\left(W(\gamma);\gamma^2\right)$, with $W(\gamma)$ given by \eqref{W_para}, the local problem $P_L\left(\left\{X_i(\gamma)\right\}_{i=1}^N;\gamma^2\right)$, with $X_i$ given by \eqref{X_para}, is called a localization of $P_G$ if the global admissibility condition in \eqref{GAC4} holds.
\end{definition}
\begin{definition}[Localization Gap]
The localization gap between the global problem $P_G\left(W(\gamma);\gamma^2\right)$ and a localization  $P_L\left(\left\{X_i(\gamma)\right\}_{i=1}^N;\gamma^2\right)$ is given by
\begin{align}
\left\|P_L-P_G\right\|\triangleq \sqrt{\gamma_L^2-\gamma_G^2},
\end{align}
where $\gamma_G$ and $\gamma_L$ are the respective solutions of the global and local problems, and are given by $\gamma_G=P_G\left(W(\gamma);\gamma^2\right)$ and $\gamma_L = P_L\left(\left\{X_i(\gamma)\right\}_{i=1}^N;\gamma^2\right) = \max\left\{\bar{\gamma}_i\right\}_{i=1}^N$, where $\bar{\gamma}_i = P_{H_i}\left(X_i(\gamma);\gamma^2\right)$ is the solution of synthesis problem associated with the subsystem $H_i$.
\end{definition}
\begin{definition}[Localization Distance]
Given an interconnection defined by $M$, a global problem $P_G\left(W(\gamma);\gamma^2\right)$ and a localization $P_L\left(\left\{X_i(\gamma)\right\}_{i=1}^N;\gamma^2\right)$, the localization distance between the global supply rate $W(\gamma)$ and the local supply rates $X\triangleq\left\{X_i(\gamma)\right\}_{i=1}^N$ is given by
\begin{align}
D(X,W) = \sigma_{max}\left(Q_1^\top Y_L(X)Q_1-Q_2^\top Y_G(W)Q_2\right).
\end{align}
\end{definition}
\begin{remark}[Closest Localization]
The closest localization is then obtained by solving the following minimization problem
\begin{align}
\begin{array}{rl}
\text{min}&D(X,W)\\
\text{s.t}&Q_1^\top Y_LQ_1 - Q_2^\top Y_GQ_2\preceq0,					
\end{array}
\end{align}
which is equivalent to the semi-definite program
\begin{align}
\begin{array}{rl}
\text{min}&-t\\
\text{s.t}&\\
					&tI-Q_1^\top Y_L(X)Q_1\preceq0\\\\
					&Q_1^\top Y_LQ_1 - Q_2^\top Y_GQ_2\preceq0,					
\end{array}
\end{align}
\end{remark}
\begin{remark}[Exact Localization]
The localization is said to be \emph{exact} if the global admissibility condition holds with equality. In this case the localization distance, hence the localization gap, is zero.
\end{remark}
\begin{lemma}\label{lemma1}
Let $P_L(X^\dagger;\gamma^2)$ be the closest localization of the global problem $P_G(W(\gamma);\gamma^2)$. Then
\begin{align}
Q_1^\top\left(Y_L(X)-Y_L(X^\dagger)\right)Q_1\preceq0,
\end{align}
for all localization $P_L(X;\gamma^2)$ of $P_G(W(\gamma);\gamma^2)$.
\end{lemma}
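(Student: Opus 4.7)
The plan is to pass through a convex-combination / directional-derivative argument that leverages the optimality of $X^\dagger$ in the SDP that defines the closest localization. For brevity I introduce the shorthands $\Phi(X) = Q_1^\top Y_L(X) Q_1$ and $\Psi(W) = Q_2^\top Y_G(W) Q_2$, so that the localization condition is $\Phi(X) \preceq \Psi(W)$. Because $Y_L$ is linear in the constituent blocks $\{X_{i_k}\}$, the map $\Phi$ is affine and the set of localizations is convex. On the feasibility set, $\Phi(X) - \Psi(W)$ is negative semi-definite, so $\sigma_{max}(\Phi(X) - \Psi(W)) = \lambda_{max}(\Psi(W) - \Phi(X))$, and the localization distance reduces to $D(X,W) = \lambda_{max}(\Psi(W) - \Phi(X))$.

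Let $X$ be any localization and define the convex combination $X_\theta = (1-\theta)\,X^\dagger + \theta\,X$ for $\theta \in [0,1]$. By convexity of the localization set, $X_\theta$ is itself a localization, and by linearity $\Phi(X_\theta) = (1-\theta)\,\Phi(X^\dagger) + \theta\,\Phi(X)$. The optimality of $X^\dagger$ forces $D(X_\theta,W) \ge D(X^\dagger,W)$ for every $\theta \in [0,1]$, i.e., $\lambda_{max}\bigl([\Psi(W) - \Phi(X^\dagger)] + \theta[\Phi(X^\dagger) - \Phi(X)]\bigr)$ is non-decreasing at $\theta = 0^+$. Applying the standard subdifferential formula for $\lambda_{max}$ (a Danskin-type directional-derivative computation), this yields, for every unit eigenvector $v$ in the top eigenspace of $\Psi(W) - \Phi(X^\dagger)$, the inequality $v^\top\bigl(\Phi(X^\dagger) - \Phi(X)\bigr) v \ge 0$.

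The decisive step, and the one I expect to be the main obstacle, is lifting this scalar optimality condition to the full semi-definite ordering $\Phi(X) \preceq \Phi(X^\dagger)$. My plan is to argue by contradiction: if the ordering failed along some direction $u$, one would construct a feasible perturbation of $X^\dagger$ inside the localization set whose $\Phi$-image strictly increases along $uu^\top$ while remaining below $\Psi(W)$, thereby shrinking $D$ and contradicting the optimality of $X^\dagger$. Producing such a perturbation requires exploiting the degrees of freedom provided by the affine $\Phi$-map and the block-diagonal structure of $Y_L$, so that every PSD-offending direction lies in the range of $\Phi$. An alternative and cleaner route is a KKT/dual-multiplier analysis of the defining SDP: complementary slackness at the optimum furnishes a PSD Lagrange multiplier whose action on $\Phi(X^\dagger) - \Phi(X)$ is non-negative and whose range is rich enough, together with the freedom in $X$, to conclude directly that $Q_1^\top\bigl(Y_L(X) - Y_L(X^\dagger)\bigr)Q_1 \preceq 0$ for every localization $X$.
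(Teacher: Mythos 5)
Your proposal stalls exactly where you predicted it would, and that gap is genuine: the lift from the scalar first-order condition $v^\top\bigl(\Phi(X^\dagger)-\Phi(X)\bigr)v\ge 0$ (with your shorthand $\Phi(X)=Q_1^\top Y_L(X)Q_1$, $\Psi(W)=Q_2^\top Y_G(W)Q_2$, and $v$ ranging over the top eigenspace of $\Psi(W)-\Phi(X^\dagger)$) to the full Loewner ordering $\Phi(X)\preceq\Phi(X^\dagger)$ is never carried out, and neither of the routes you sketch is likely to close it. The objective $D$ sees only an extremal eigenvalue, so optimality of $X^\dagger$ constrains feasible perturbations only through their action on the corresponding active eigenspace; a direction $u$ outside that eigenspace leaves $D$ unchanged to first order, so a violation of the ordering along such a $u$ produces no contradiction with optimality, no matter how rich the range of the affine map $\Phi$ is. The KKT route fares no better: the multiplier attached to an extremal-eigenvalue (or $tI\preceq\Phi(X)$) constraint is supported on that same eigenspace, so complementary slackness returns precisely the scalar inequality you already have, not the matrix inequality the lemma asserts. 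First-order optimality of an extremal-eigenvalue objective simply does not single out a Loewner-maximal point of a convex feasible set, so the decisive step of your plan would fail without substantial additional structure.

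For comparison, the paper's own proof is a one-line contradiction of a much weaker negation: it supposes $Q_1^\top\bigl(Y_L(X)-Y_L(X^\dagger)\bigr)Q_1\succ 0$ for some localization $X$, observes that this forces $\lambda_{\min}\bigl(Q_1^\top Y_L(X)Q_1\bigr)>\lambda_{\min}\bigl(Q_1^\top Y_L(X^\dagger)Q_1\bigr)$, and then invokes the SDP form of the closest localization (maximize $t$ subject to $tI\preceq Q_1^\top Y_L(X)Q_1$ and the admissibility LMI, i.e.\ maximize $\lambda_{\min}$) to contradict optimality of $X^\dagger$. In other words, the paper treats the failure of $\preceq 0$ as if it were $\succ 0$ and identifies $D$ with the negative of $\lambda_{\min}\bigl(Q_1^\top Y_L(X)Q_1\bigr)$ through its stated SDP equivalence; the difficulty you isolated—controlling every direction, not just the definite case or the active eigenspace—is exactly what that argument sidesteps rather than resolves. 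So your Danskin-style analysis is a more careful and genuinely different route, but as written it does not prove the lemma, and the missing lifting step is not a technicality you can defer: it is the whole content of the claim.
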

\begin{proof}
Suppose there exist a localization $P_L(X;\gamma^2)$ such that
\begin{align*}
Q_1^\top\left(Y_L(X)-Y_L(X^\dagger)\right)Q_1\succ0.
\end{align*}
It follows that
\begin{align*}
\lambda_{min}\left(Q_1^\top Y_L(X^\dagger)Q_1\right) - \lambda_{min}\left(Q_1^\top Y_L(X)Q_1\right)\le0
\end{align*}
which implies that $D(X,W)<D(X^\dagger,W)$, a contradiction.
\end{proof}
\begin{theorem}
Given an interconnected system and an associated global problem. If the closest localization is not feasible, then all feasible local problems are not globally admissible.
\end{theorem}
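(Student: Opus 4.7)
I would proceed by contrapositive: assume some local problem $P_L(X;\gamma^2)$ is both feasible and globally admissible, and show that the closest localization $P_L(X^\dagger;\gamma^2)$ is then feasible as well. Because $X$ is globally admissible, it sits in the constraint set of the optimization defining the closest localization, so Lemma~\ref{lemma1} applies and yields $Q_1^\top\bigl(Y_L(X)-Y_L(X^\dagger)\bigr)Q_1\preceq0$. Intuitively, the closest localization carries the ``largest'' supply rates of any localization in the $Q_1$-projected sense, and a larger supply rate should be the easiest to render feasible.

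I would then unpack this projected inequality using the quadratic parametrization. With $Q_1=I_{2\times 2}\otimes T_1$ and $T_1\triangleq\left[\begin{smallmatrix}M_{12}&M_{11}\\ 0&I\end{smallmatrix}\right]$, sandwiching the Lemma~\ref{lemma1} inequality between $[\gamma I\ \ I]$ and its transpose collapses it to
\begin{equation*}
T_1^\top X(\gamma)\, T_1\;\preceq\;T_1^\top X^\dagger(\gamma)\, T_1,\quad\forall\gamma\in\mathbb{R},
\end{equation*}
where $X(\gamma)$ is the block matrix assembled from the diagonals $X^{jk}(\gamma)=\mathrm{diag}_i\bigl(X_i^{jk}(\gamma)\bigr)$. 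Equivalently, for every $(\mathbf{w},\mathbf{y})$ and the induced $\mathbf{v}=M_{11}\mathbf{y}+M_{12}\mathbf{w}$,
\begin{equation*}
\sum_{i=1}^N\begin{bmatrix}\mathbf{v}_i\\ \mathbf{y}_i\end{bmatrix}^{\!\top}\bigl(X_i^\dagger(\gamma)-X_i(\gamma)\bigr)\begin{bmatrix}\mathbf{v}_i\\ \mathbf{y}_i\end{bmatrix}\ge 0.
\end{equation*}

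To close the argument, I would exploit monotonicity of IQCs in their multipliers: if $\Pi_1\preceq\Pi_2$ then $\mathrm{IQC}(\Pi_1)\subseteq\mathrm{IQC}(\Pi_2)$, since the defining integrand only grows under a larger multiplier. Let $\gamma_f$ be a feasibility parameter for $X$, so for each $i$ there is a controller rendering $H_i\in\mathrm{IQC}(X_i(\gamma_f))$. Combined with the projected dominance evaluated at $\gamma_f$, monotonicity should certify each $H_i\in\mathrm{IQC}(X_i^\dagger(\gamma'))$ for some parameter $\gamma'$, contradicting the hypothesized infeasibility of $X^\dagger$.

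The hard part will be precisely this final transfer. Lemma~\ref{lemma1} delivers dominance only through the $T_1$ projection and in an \emph{aggregated} sense over interconnection-consistent signals, whereas each individual IQC certificate must hold for \emph{all} signals generated by $H_i$ in open loop under some admissible controller. My first attempt would be to promote the projected inequality to a subsystem-wise dominance $X_i(\gamma)\preceq X_i^\dagger(\gamma)$, leveraging the block-diagonal structure of $X^{jk}(\gamma)$ together with the full column rank of $T_1$ inherited from the well-posedness assumption $M_{12}^\top M_{12}\succ0$. If that direct route fails, I would fall back on the KKT conditions of the semi-definite program defining the closest localization to expose an explicit relationship between the local supply rates at $X$ and at $X^\dagger$.
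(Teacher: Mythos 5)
Your proposal is essentially the paper's proof run in the contrapositive direction: both arguments invoke Lemma~\ref{lemma1} to obtain $Q_1^\top\left(Y_L(X)-Y_L(X^\dagger)\right)Q_1\preceq0$ and then use monotonicity of the IQC constraint in the multiplier (a larger supply rate is easier to satisfy) to transfer feasibility from $X$ to $X^\dagger$, which is exactly the paper's transfer of infeasibility from $X^\dagger$ to $X$. The ``hard part'' you flag---promoting the $Q_1$-projected, interconnection-aggregated dominance to subsystem-wise dominance of the multipliers---is precisely the step the paper passes over with an unexplained ``$\Leftrightarrow$'' between the summed inequality and the per-subsystem ones, so your plan is, if anything, more candid about that step; the paper uses no rank or KKT machinery beyond what you already sketch.
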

\begin{proof}
Let $\left\{X_i^\dagger\right\}_{i=1}^N$ be the set of multipliers associated with the closest localization. For any localization with multipliers $\left\{X_i\right\}_{i=1}^N$, it follows from Lemma~\ref{lemma1} that
\begin{align*}
Q_1^\top\left(Y_L(X)-Y_L(X^\dagger)\right)Q_1\preceq0,
\end{align*}
which implies that
\begin{align*}
\sum_{i=1}^N\left[\begin{array}{c}\mathbf{v}_i\\\mathbf{y}_i\end{array}\right]^\top\left(X_i(\gamma)-X_i^*(\gamma)\right)\left[\begin{array}{c}\mathbf{v}_i\\\mathbf{y}_i\end{array}\right]\le0,\hspace{2mm}\forall \gamma\in\mathbb{R}
\end{align*}
\begin{align*}
\Leftrightarrow\left[\begin{array}{c}\mathbf{v}_i\\\mathbf{y}_i\end{array}\right]^\top X_i(\gamma)\left[\begin{array}{c}\mathbf{v}_i\\\mathbf{y}_i\end{array}\right]\le\left[\begin{array}{c}\mathbf{v}_i\\\mathbf{y}_i\end{array}\right]^\top X_i^\dagger(\gamma)\left[\begin{array}{c}\mathbf{v}_i\\\mathbf{y}_i\end{array}\right],\hspace{2mm}\forall \gamma\in\mathbb{R}, i=1,\hdots,N.
\end{align*}
Now, $P_L\left(\left\{X_i^\dagger(\gamma)\right\}_{i=1}^N;\gamma^2\right)$ not feasible implies that there exists at least one $i\in\{1,\hdots,N\}$ such that
\begin{align*}
&\int_\mathbb{R}{\left[\begin{array}{c}\widehat{\mathbf{v}}_i\\\widehat{\mathbf{y}}_i\end{array}\right]^*X_i^\dagger(\gamma)\left[\begin{array}{c}\widehat{\mathbf{v}}_i\\\widehat{\mathbf{y}}_i\end{array}\right]d\omega}<0,\text{ for some } \mathbf{v}_i\in\mathbb{L}_{2e}\\
\Rightarrow&\int_\mathbb{R}{\left[\begin{array}{c}\widehat{\mathbf{v}}_i\\\widehat{\mathbf{y}}_i\end{array}\right]^*X_i(\gamma)\left[\begin{array}{c}\widehat{\mathbf{v}}_i\\\widehat{\mathbf{y}}_i\end{array}\right]d\omega}<0,
\end{align*}
which implies that $P_L\left(\left\{X_i(\gamma)\right\}_{i=1}^N;\gamma^2\right)$ not feasible.  
\end{proof}

The closest localization has the biggest set of multipliers over which the IQCs are defined. Thus, it's solution is a global lower bound on the solution of all globally admissible local problems. Consequently, the closest localization represents the most relaxed local problem that is globally admissible. It creates a nice way to transform global control synthesis problem into local syntheses problems for the subsystems in such a way that the resulting closed loop subsystems will interact through the interconnection to ensure that the feasibility of global problem is guaranteed.

It is noteworthy that no specific control design method is enforced. Only a somewhat unified description of of control design problem is made using IQC/Dissipativity. This is one of the merits of this approach because it allows for heterogeneous control synthesis for the subsystems. Moreover, addition requirements (e.g robustness, saturation etc) can be built in at the subsystems level. The only requirement is the resulting closed loop subsystem satisfies the IQC defined by the corresponding multiplier of the closest localization at the respective interconnection input-output port.

\subsection{Group localization}
\begin{definition}[Group]
A group is any finite collection of subsystems 
\begin{align}
 G = \left\{H_i\right\}_{\displaystyle i\in\left\{1,2,\hdots,N\right\}},
\end{align}
with group capacity $\#(G)\le N$, where the function $\#(.)$ returns the cardinality of its argument.
\end{definition}
\begin{remark}
Order does not matter in this definition. In other words, two groups are considered the same if the elements of any can be produced by rearranging the elements of the other. 
\end{remark}

Starting with two integers $\{N_g,\bar{N}\}<N$, the objective in this section is to identify disjointed groups $\left\{G_j\right\}_{j=1}^{N_g}$, with
\begin{align}
\begin{array}{l}
\displaystyle \sum_{j=1}^{N_g}{\#(G_j)}=N,\\
\#(G_j)\le\bar{N},\hspace{2mm} j\in\{1,2,\hdots N_g\},
\end{array}
\end{align}
and the corresponding multipliers $\left\{X_j\right\}_{j=1}^{N_g}$,  such that the local problem $P_L\left(\left\{X_j(\gamma)\right\}_{j=1}^{N_g};\gamma^2\right)$ is the closest localization. Here, for each localization, the $j$th synthesis problem is defined for the corresponding group which itself is a global problem with respect to its element subsystems. The underlying assumption in this section is that the capacity of each group is small enough that their respective synthesis problem can be solved without the need for any localization. This can be guaranteed by picking $\bar{N}$ small enough.

\begin{figure}[htbp]
	\centering
		\includegraphics[scale=0.75]{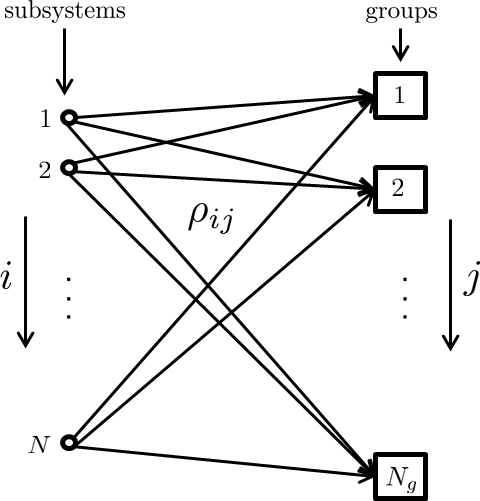}
	\caption{Group Localization}
	\label{fig:Group_Localization}
\end{figure}

The schematic for the group localization is shown in Figure~\ref{fig:Group_Localization}, where $\rho_{ij}$ denotes the degree of membership of subsystem $i$ in group $j$. For this problem, we take $\rho_{ij}\in\{0,\hspace{2mm}1\},i\in\{1,2,\hdots N\},j\in\{1,2,\hdots,N_g\}$. The following constraints are imposed on $\rho_{ij}$;
\begin{enumerate}
	\item {Each subsystem is allowed one and only group assignment. 
		\begin{align}\label{group_cons1}
		\sum_{j=1}^{N_g}\rho_{ij}=1, \hspace{2mm}\forall i\in\{1,2\hdots,N\}.
		\end{align}
	}
	\item{The number of subsystems in each group cannot exceed the group capacity $\bar{N}$.
	\begin{align}\label{group_cons2}
		\sum_{i=1}^N\rho_{ij}\le\bar{N}<N, \hspace{2mm}\forall j\in\{1,2\hdots,N_g\}.
	\end{align}
	}
\end{enumerate}

\begin{proposition}\label{relaxation_thm}
Let $P=P^\top\in\left\{0,\hspace{2mm}1\right\}^{N\times N}$ be a symmetric matrix of binary values that satisfies

\begin{itemize}
	\setlength\itemsep{0em}
	\item $\text{rank}(P)=N_g,$
	\item $P\succeq0,$
	\item $P_{ii} = 1, \hspace{2mm}i,j\in\{1,2,\hdots,N\}.$
\end{itemize}
Then $P$ admits the factorization $P=\rho\rho^\top$, where $\rho\in\left\{0,\hspace{2mm}1\right\}^{N\times N_g}$ is a matrix of binary numbers whose elements satisfy the constraints in \eqref{group_cons1} and \eqref{group_cons2} for some $\bar{N}<N$. Moreover, the nonzero singular values of $P$ are $\sigma(P)=\left\{\#(G_j)\right\}_{j=1}^{N_g}$.
\end{proposition}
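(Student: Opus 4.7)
My plan is to reduce the proposition to a block-diagonal structure for $P$ induced by an equivalence relation, and then read off the factorization and singular values directly.

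First I would establish the central structural fact: define a relation on $\{1,\ldots,N\}$ by $i\sim k$ if and only if $P_{ik}=1$. Reflexivity is immediate from $P_{ii}=1$, and symmetry from $P=P^\top$. The nontrivial piece is transitivity. To prove it, suppose $P_{ij}=P_{jk}=1$ and consider the $3\times 3$ principal submatrix on indices $\{i,j,k\}$, namely $\left[\begin{smallmatrix}1&1&P_{ik}\\1&1&1\\P_{ik}&1&1\end{smallmatrix}\right]$. Since $P\succeq 0$, every principal minor is nonnegative. Expanding the determinant of this submatrix gives $-(1-P_{ik})^2\ge 0$, which forces $P_{ik}=1$. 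So $\sim$ is an equivalence relation, and I would let $G_1,\ldots,G_k$ denote its equivalence classes.

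Next I would exploit this structure. After a permutation of indices grouping equivalence classes together, $P$ becomes block-diagonal with the $j$th block equal to the all-ones matrix $\mathbf{1}_{\#(G_j)}\mathbf{1}_{\#(G_j)}^\top$, since $P_{ik}=1$ iff $i$ and $k$ lie in the same class. Each such all-ones block has rank one, so $\mathrm{rank}(P)=k$, and the hypothesis $\mathrm{rank}(P)=N_g$ forces $k=N_g$. I would then define $\rho\in\{0,1\}^{N\times N_g}$ by $\rho_{ij}=1$ iff $i\in G_j$. A direct computation gives $(\rho\rho^\top)_{ik}=\sum_j \rho_{ij}\rho_{kj}=1$ exactly when $i$ and $k$ belong to the same group, which matches $P_{ik}$; hence $P=\rho\rho^\top$. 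The row-sum constraint \eqref{group_cons1} holds because every index belongs to exactly one equivalence class, and the column-sum constraint \eqref{group_cons2} holds with $\bar N:=\max_j \#(G_j)<N$ (or any chosen upper bound no smaller than that maximum).

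Finally, for the singular values I would pass to $\rho^\top\rho$. Because the groups are disjoint, distinct columns of $\rho$ have disjoint supports, so $\rho^\top\rho$ is the diagonal matrix $\mathrm{diag}\bigl(\#(G_1),\ldots,\#(G_{N_g})\bigr)$. Since the nonzero eigenvalues of $\rho\rho^\top$ coincide with those of $\rho^\top\rho$, and since $P=\rho\rho^\top$ is PSD so its singular values equal its eigenvalues, the nonzero singular values of $P$ are exactly $\{\#(G_j)\}_{j=1}^{N_g}$, as claimed.

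The main obstacle is the transitivity step, since everything else follows routinely once that is in hand; the key trick there is noticing that positive semidefiniteness of the $3\times 3$ principal minor collapses to the identity $-(1-P_{ik})^2\ge 0$, which is a rigid algebraic constraint and forces $P_{ik}=1$. Once the equivalence-relation viewpoint is installed, the block-diagonal normal form, the explicit factorization, and the diagonal structure of $\rho^\top\rho$ fall out without further difficulty.
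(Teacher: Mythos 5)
Your proof is correct and follows essentially the same route as the paper: permute indices so that $P$ becomes block diagonal with all-ones blocks, read off the factorization $P=\rho\rho^\top$, and obtain the nonzero singular values $\#(G_j)$ from that block structure (the paper via similarity under the permutation, you via the diagonal matrix $\rho^\top\rho$). The one substantive difference is that you actually prove the structural step the paper merely asserts (``It follows from all the properties of $P$ that there exists a permutation matrix $T$ such that\ldots''): your equivalence-relation argument, with transitivity forced by nonnegativity of the $3\times 3$ principal minor, which equals $-(1-P_{ik})^2$, supplies exactly the justification that is left implicit there, so your write-up is if anything more complete. One minor caveat, shared with the paper's own statement: the conclusion $\bar N<N$ silently excludes the degenerate case $N_g=1$, where the single group has size $N$; this is an artifact of the proposition as stated, not a flaw in your argument.
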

\begin{remark}
The elements of matrix $P$ can be interpreted as
\begin{align}
P_{ij}=\left\{\begin{array}{rl}1&\text{ if the subsystems }i\text{ and }j\text{ belong to the same group}\\0&\text{ otherwise}\end{array}\right.
\end{align}
\end{remark}

\begin{proof}
It follows from all the properties of $P$ that there exists a permutation matrix $T$ such that
\begin{align}
P = T\left[\begin{array}{ccc}\mathcal{I}_{N_1}&&\\&\ddots&\\&&\mathcal{I}_{N_{N_g}}\end{array}\right]T^\top,
\end{align}
where $\mathcal{I}_{N_j}$ is an $N_j\times N_j$ matrix of $1$s, with $N_j\triangleq\#(G_j)$. Equivalently,
\begin{align}\label{similar}
P = T\left[\begin{array}{ccc}\mathbf{1}_{N_1}&&\\&\ddots&\\&&\mathbf{1}_{N_{N_g}}\end{array}\right]\left[\begin{array}{ccc}\mathbf{1}_{N_1}^\top&&\\&\ddots&\\&&\mathbf{1}_{N_{N_g}}^\top\end{array}\right]T^\top,
\end{align}
where $\mathbf{1}_{N_j}$ is an $N_j$-dimensional vector of $1$s. Thus, the first conclusion follows by taking
\begin{align}
\rho = T\left[\begin{array}{ccc}\mathbf{1}_{N_1}&&\\&\ddots&\\&&\mathbf{1}_{N_{N_g}}\end{array}\right].
\end{align}
Moreover, using \eqref{similar}, it is clear that $P$ is similar to $\left[\begin{array}{ccc}\mathcal{I}_{N_1}&&\\&\ddots&\\&&\mathcal{I}_{N_{N_g}}\end{array}\right]$. Thus
\begin{align}
\sigma\left(P\right)=\sigma\left(\left[\begin{array}{ccc}\mathcal{I}_{N_1}&&\\&\ddots&\\&&\mathcal{I}_{N_{N_g}}\end{array}\right]\right)=\left\{N_j\right\}_{j=1}^{N_g}=\left\{\#\left(G_j\right)\right\}_{j=1}^{N_g}.
\end{align}
\end{proof}

The result above demonstrates that the group membership function can be inferred from a positive definite binary matrix satisfying the states hypothesis. Thus, the group allocation problem is given by

\begin{align}
\begin{array}{rl}
\text{min}&D(P\circ X,W)\\
\text{s.t}&\\
					&Q_1^\top Y_L(P\circ X)Q_1 - Q_2^\top Y_G(W)Q_2\preceq0,\\
					&P\succeq0,\\
					&\text{rank}(P)=N_g,\\
					&P_{ij}\in\{0,\hspace{2mm}1\},\hspace{2mm}P_{ii}=1,
\end{array}
\end{align}

where the binary operator $\circ$ defined as
\begin{align}
P\circ X = \left[\begin{array}{cccc}P_{11}X_{11}&P_{12}X_{12}&\hdots&P_{1N}X_{1N}\\\vdots&\vdots&\ddots&\vdots\\P_{1N}X_{1N}^\top&P_{2N}X_{2N}^\top&\hdots&P_{NN}X_{NN}\end{array}\right].
\end{align}
Notice that the local supply rate is now allowed to be a full block as opposed to the diagonal block of the previous sub section, thereby giving more room to reduce conservatism. The optimization problem above is nonconvex and NP-hard. The details of the relaxation approaches and algorithms to solve the problem is left as future work. However, it is noted that, for a fixed $P$, the problem is convex and well-behaved. Thus a quick alternative approach considered in this present work is alternating minimization. First, the optimization is relaxed using the second part of proposition~\ref{relaxation_thm} and then regularized as follows;

\begin{align}
\begin{array}{rl}
\text{min}&D(P\circ X,W) + \Omega(P)\\
\text{s.t}&\\
					&Q_1^\top Y_L(P\circ X)Q_1 - Q_2^\top Y_G(W)Q_2\preceq0,\\
					&P\succeq0,\\
					&\sigma_{max}(P)\le \bar{N},\\
					&P_{ij}\in\left[0,\hspace{2mm}1\right],\hspace{2mm}P_{ii}=1,
\end{array}
\end{align}

where $\Omega(.)$ is any sparsity encouraging norm e.g $l_1$-norm\cite{candes2008enhancing}, trace-norms \cite{pong2010trace}, etc. Consequently, the alternating minimization algorithm is given in Algorithm~\ref{alg}.

\begin{algorithm}
\caption{Alternating minimization for group localization}\label{alg}
\begin{algorithmic}[1]
	\State{Set $X^0$ as such that
	\begin{align*}
	 Q_1^\top Y_L(X^0)Q_1 - Q_2^\top Y_G(W)Q_2\preceq0
	\end{align*}
	}
	\State{Initialize $k=0$ and $\varepsilon>0$ big enough}
		\While{$\varepsilon\ge\text{tol}$}\Comment{$\text{tol}$ is a specified convergence criterion}
			\State {Set $P^k$ by solving the minimization problem
								\begin{align*}
									\begin{array}{rl}
									\text{min}&D(P^k\circ X^k,W) + \Omega(P)\\
									\text{s.t}&\\
														&Q_1^\top Y_L(P^k\circ X^k)Q_1 - Q_2^\top Y_G(W)Q_2\preceq0,\\
														&P^k\succeq0,\\
														&\sigma_{max}(P^k)\le \bar{N},\\
														&P^k_{ij}\in\left[0,\hspace{2mm}1\right],\hspace{2mm}P^k_{ii}=1.
									\end{array}
									\end{align*}
						 }
			\State{Set $X^{k+1}$ by solving the minimization problem
									\begin{align*}
									\begin{array}{rl}
									\text{min}&D(P^k\circ X^{k+1},W)\\
									\text{s.t}&\\
														&Q_1^\top Y_L(P^k\circ X^{k+1})Q_1 - Q_2^\top Y_G(W)Q_2\preceq0.
									\end{array}
									\end{align*}
						 }
			\State $\varepsilon\gets |D(P^k\circ X^{k+1},W)-D(P^k\circ X^{k},W)|$
			\State $k\gets k+1$
		\EndWhile
		\State \textbf{return} $X^k$ and $P^k$
\end{algorithmic}
\end{algorithm}

\newpage

\section{Future Works}\label{Conclusion}
We conclude this paper by giving some interesting directions for future works.

\subsection*{Extended generalization} The approaches used in this paper can be extended to a more general class of problems by replacing the quadratic assumptions to a more general convex functions. An interesting example of such generalization is \emph{sum of squares} (SOS). SOS is vastly studied in literature and has very rich application in controls \cite{prajna2004new,papachristodoulou2002construction,sanchez1984fundamental}. As such, there exists a wealth of knowledge to build on. Direct application angles, for instance, are; more general description of syntheses problems using \emph{integral sum of squares constraint} and SOS parametrization of the multipliers.
\subsection*{Components specification for a large-scale design problem} One can describe components characteristics in a big design project as an IQC. Then use the methods developed in this paper to identify the set characteristics that best achieve the design objective which is also expressed as an IQC.
\subsection*{Detailed analysis of algorithms} Detailed algorithm design and analysis for the global admissibility problem and the group localization problems where not considered. Off-the-shelf  ADMM and alternating minimization algorithms were used. Design of specialized algorithms for the problem of the form considered in this paper is an interesting problem to consider.
\subsection*{Application} Applying a new technique to particular practical problems is always an interesting endeavor. The methods presented in this paper describe new ways of looking at decentralized control and performance specification problems.
\subsection*{Specialization to port-controlled Hamiltonian systems} Port-controlled Hamiltonian Systems (PCHS) are studied extensively \cite{schaft1999l2,PCHS1,PCHS2,PCHS3} as a generalization of the network modeling of physical systems with independent storage elements. Specializing the techniques in this paper to PCHS can give very useful insight into the design of control for large scale interconnected physical systems. Combined with techniques like graph separation\cite{ellis1987graph}, the results in this paper can be used to do control synthesis for large-scale multi-energy domain physical systems using graphical modeling tool like bondgraph\cite{karnopp2012system}
\subsection*{Dynamic interconnection} The problems considered in this paper assumed static interconnection. A natural extension is the inclusion of dynamics in the interconnection. It is not clear at this point if the results obtain so far will extend to dynamic interconnection. Moreover, what happens if the global and local IQCs are known and one is interested in obtaining an interconnection (static or dynamic) that guarantees the feasibility of the global admissibility condition is an interesting question to consider. 
\subsection*{Optimization counterpart of localization} Given a large scale convex optimization problem. Is there a way to obtain a set of smaller optimization problems in such a way that its solution is related to the solution of the original problem in a way we can quantify/design? If this is true, it might give an interesting extension to operator splitting problems.





\bibliographystyle{elsarticle-num}
\bibliography{references}







\end{document}